 \def\LaTeX{\leavevmode L\raise.42ex
   \hbox{\kern-.3em\size{\sf@size}{0pt}\selectfont A}\kern-.15em\TeX}
\newcommand{\BibTeX}{{\rm B\kern-.05em{\sc
i\kern-.025emb}\kern-.08em\TeX}}
\newtheorem{col}{Corollary}[section]
\newtheorem{thm}{Theorem}[section]
\newtheorem{lem}[thm]{Lemma}
\newtheorem{prop}[thm]{Proposition}
\newtheorem{rem}[thm]{Remark}
\theoremstyle{definition}
\newtheorem{defn}{Definition}
\numberwithin{equation}{section}
\begin{document}

\title{$\varphi$-transform  on domains}

\maketitle
\begin{center}

\author{Isaac Z. Pesenson }\footnote{ Department of Mathematics, Temple University,
 Philadelphia,
PA 19122; pesenson@temple.edu. The author was supported in
part by the National Geospatial-Intelligence Agency University
Research Initiative (NURI), grant HM1582-08-1-0019. }

\end{center}

\begin{abstract}
The goal of the present paper is to construct bandlimited  highly localized and nearly tight  frames on 
domains with smooth boundaries in Euclidean spaces. These frames are used do describe corresponding Besov spaces.
\end{abstract}

 \keywords{Dirichlet boundary conditions, second-order differential operators, eigenfunctions, frames, interpolation spaces, Besov spaces}
 \subjclass[2000]{ 43A85; 42C40; 41A17;
Secondary 41A10}

\maketitle

\section{Introduction}

The goal of the  paper is to describe construction of  bandlimited and highly localized almost tight frames on 
domains with smooth boundaries in Euclidean spaces. These frames are used do describe Besov spaces on domains. Our result is motivated by the well known results of  M. Frazier and  B. Jawerth  about $\varphi$-transform  \cite{FJ1}-\cite{FJW}.

 Let $\Omega\subset \mathbb{R}^{d}$ be a domain with a smooth boundary  $\Gamma$. In the space $L_{2}(\Omega)$ we consider a self-adjoint positive definite  operator $L$ generated by an expression 
 \begin{equation}\label{Op}
 Lf=-\sum_{k,i=1}^{d}\partial_{_{x_{k}}}(a_{k, i}(x)\partial_{x_{i}}f),
\end{equation}
 with zero boundary condition. We will need  the following  family of  cubes
\begin{equation}\label{cube-0}
Q_{k}(\rho)=\left\{x\in \mathbb{R}^{d}: \rho k_{\nu}\leq x_{\nu}\leq \rho  (k_{\nu}+1), \>\nu=1,...,d \right\},
\end{equation}
where $k=(k_{1},...,k_{d})\in \mathbb{Z}^{d}$.

Consider  the sequence 
$
\omega_{j}=2^{j+1},\>\>\>j=0,1,...,
$ 
and pick  a constant $0<\delta<1$.  For  a constant $a_{0}=a_{0}(\Omega, L)>0$  (which appears  in  Theorem \ref{Frame-th}  below)  construct  the sequence 
$
\rho_{j}=a_{0}\delta^{1/d}\omega_{j}^{-1/2},\>\>\>j=0,1,....
$
and introduce  family of  cubes
\begin{equation}\label{cubes}
Q_{k}(\rho_{j}), \>\>\nu=1,...,d,\>\>j=0,1,..., \>\>k=(k_{1},...,k_{d})\in \mathbb{Z}^{d}.
\end{equation}
Let $Q_{i}(\rho_{j}),\>\> \>j=0,1,...,\>\>\>i=1,..., I_{j}, $ be a subcollection of  cubes in (\ref{cubes})  whose intersection with $\Omega$ has positive measure.  If 
\begin{equation}\label{cover-0}
U_{j,i}=Q_{i}(\rho_{j})\cap\Omega, \>\> \>j=0,1,...,\>\>i=1,..., I_{j},
\end{equation}
then   for every fixed $j=0,1,..., $ the collection $\{U_{j,i}\}_{i=1}^{I_{j}},\>\>\>U_{j,i}\subseteq Q_{i}(\rho_{j}),$ will be a disjoint (except for a set of measure zero) cover of $\Omega$.

With every set $U_{j,i},  \>\>\>j=0,1,...,\>\>\>i=1,..., I_{j},$ we will associate a function $\varphi_{j,i}\in L_{2}(\Omega)$ in a way it is described in the following main theorem.

\begin{thm}(Frame Theorem)\label{FRAME-TH}

For the functions $\varphi_{j,i},\>\>j=0,1,...,\>\>i=1,..., I_{j},$ the following holds

\begin{enumerate}

 \item every $ \varphi_{j,i}$ is bandlimited  in the sense that it is a linear combination of eigenfunctions  of $L$ with eigenvalues  in $[2^{2j-2}, 2^{2j+2}]$; 
 
\item every $ \varphi_{j,i}$ is localized in the following sense: 
    for any non-negative integer vector $\alpha=(\alpha_{1},...,\alpha_{d})$  and every sufficiently large $N\in \mathbb{N}$, there exists a $J(\alpha,  N)$ such that for all  $(j,i)$ with $j>J(\alpha,  N)$ and every  $x$ outside of the cube $Q_{ i}(2^{-N-2})$ one has 
   \begin{equation}
     \left|   \frac{\partial^{\alpha}}{\partial x_{1}^{\alpha_{1}}...\partial x_{d}^{\alpha_{d}}} 
\varphi_{j,i}(x)\right|\leq a_{0}(\Omega, L)\delta 2^{-N-\frac{dj}{2}},\>\>j>J(\alpha,  N),\>\>1\leq i\leq I_{j}.
   \end{equation}
 
In addition, there  exists a constant $C$ such that for all pairs $(j,i)$ 
   \begin{equation}
         \|\varphi_{j,i}\|_{L_{2}(\Omega)}\leq C .                              
   \end{equation}

\item $\{\varphi_{j,i}\}$  is a frame in $L_{2}(\Omega)$ with constants $1-\delta$ and $1$, i.e.
 $$
 (1-\delta)\|f\|^{2}_{L_{2}(\Omega)}\leq \sum_{j\geq 0}^{\infty}\>\> \sum_{1\leq i\leq I_{j}} \left|\left<f,\varphi_{j,i}\right>\right|^{2}\leq \|f\|^{2}_{L_{2}(\Omega)},\>\>\>f\in L_{2}(\Omega);
 $$

\end{enumerate}

\end{thm}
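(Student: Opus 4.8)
The plan is to build the functions $\varphi_{j,i}$ from a smooth Littlewood--Paley partition on the spectral side combined with a spatial partition of unity subordinate to the cubes $Q_i(\rho_j)$, and then to trace through three independent verifications: bandlimitedness, localization, and the frame inequality. Concretely, fix a nonnegative $C^\infty$ function $\psi$ on $[0,\infty)$ supported in $[2^{-2},2^{2}]$ (say, a standard dyadic bump) such that $\sum_{j\ge 0}\psi^2(2^{-2j}\lambda)\equiv 1$ for $\lambda\ge 1$, with the $j=0$ term modified to absorb the low-frequency part on $[0,1]$. Using the spectral resolution of the self-adjoint positive operator $L$, set $\Psi_j = \psi(2^{-2j}L)$, an operator that maps $L_2(\Omega)$ into the span of eigenfunctions of $L$ with eigenvalues in $[2^{2j-2},2^{2j+2}]$ and satisfies $\sum_{j\ge 0}\Psi_j^*\Psi_j = I$ on $L_2(\Omega)$. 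For the spatial refinement, choose for each fixed $j$ a partition of unity $\{\chi_{j,i}\}_{i=1}^{I_j}$ with $\chi_{j,i}$ supported near $U_{j,i}=Q_i(\rho_j)\cap\Omega$ and $\sum_i \chi_{j,i}^2\equiv 1$ on $\Omega$; then define $\varphi_{j,i}$ via the ``square root'' splitting $\varphi_{j,i} = \Psi_j(\chi_{j,i}\cdot)$ evaluated against the reproducing structure — more precisely arrange that $\sum_{j,i}\langle f,\varphi_{j,i}\rangle\varphi_{j,i}$ telescopes to $\sum_j \Psi_j(\sum_i \chi_{j,i}^2)\Psi_j f$, up to a controlled error.

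For part (1), bandlimitedness is immediate from the construction: since $\varphi_{j,i}$ lies in the range of $\psi(2^{-2j}L)$ and $\mathrm{supp}\,\psi(2^{-2j}\cdot)\subseteq[2^{2j-2},2^{2j+2}]$, it is a finite (or convergent) linear combination of eigenfunctions of $L$ with eigenvalues in that band; the uniform $L_2$-bound $\|\varphi_{j,i}\|\le C$ follows from $\|\psi(2^{-2j}L)\|_{L_2\to L_2}\le \|\psi\|_\infty$ together with $\|\chi_{j,i}\|_\infty\le 1$ and disjointness of the $U_{j,i}$. For part (2), the localization estimate is where the real work lies. The idea is to use finite-propagation-speed or, equivalently, heat/wave-kernel Gaussian bounds for the elliptic operator $L$ with Dirichlet conditions: $\psi(2^{-2j}L)$ has a kernel $K_j(x,y)$ that decays faster than any power of $2^j\,\mathrm{dist}(x,y)$, uniformly in $j$, because $\psi$ is smooth and compactly supported away from $0$ (this is a standard consequence of writing $\psi(2^{-2j}L)$ as an oscillatory integral of the wave propagator $\cos(t\sqrt L)$, whose kernel is supported in $\{\mathrm{dist}(x,y)\le t\}$). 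Since $\chi_{j,i}$ is supported in $Q_i(\rho_j)$ with $\rho_j=a_0\delta^{1/d}2^{-(j+1)/2}\sim 2^{-j/2}$, a point $x$ outside $Q_i(2^{-N-2})$ is at distance $\gtrsim 2^{-N}$ from the support of $\chi_{j,i}$ once $2^{-N}\gg \rho_j$, i.e. once $j$ exceeds some $J(\alpha,N)$; then $2^j\,\mathrm{dist}(x,y)\gtrsim 2^{j-N}$ is large, and the rapid kernel decay, after differentiating in $x$ (which costs at most polynomial factors of $2^j$, absorbed by the rapid decay), yields the stated bound $a_0\delta\, 2^{-N-dj/2}$ — the $2^{-dj/2}$ being the natural $L_2$-normalization scale $\|\chi_{j,i}\|_{L_2}\sim \rho_j^{d/2}\sim 2^{-dj/2}$.

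For part (3), the frame bounds follow from the two partition identities. Writing $S f = \sum_{j,i}\langle f,\varphi_{j,i}\rangle\varphi_{j,i}$, the construction is arranged so that $S = \sum_j \Psi_j\Big(\sum_i \chi_{j,i}^2\Big)\Psi_j + E = \sum_j \Psi_j^2 + E$, where $E$ collects commutator-type errors $[\Psi_j,\chi_{j,i}]$; the leading term $\sum_j \Psi_j^2 = I$ by the Littlewood--Paley identity, so $\langle Sf,f\rangle = \|f\|^2 + \langle Ef,f\rangle$, giving the upper bound $1$ after noting the construction can be normalized so that $S\le I$ exactly (e.g. by using $\Psi_j^*\Psi_j$ with $\sum\Psi_j^*\Psi_j=I$ and $\|\Psi_j\|\le 1$), and the lower bound $1-\delta$ provided $\|E\|\le\delta$. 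Thus the \textbf{main obstacle} is the operator-norm estimate $\|E\|\le\delta$: one must show that the commutators between the spectral multiplier $\Psi_j=\psi(2^{-2j}L)$ and the sharp-ish spatial cutoffs $\chi_{j,i}$ are small in $\ell^2$-summed operator norm, and it is precisely here that the scale $\rho_j\sim 2^{-j/2}$ and the free constant $a_0\delta^{1/d}$ enter: the cubes are chosen fine enough (relative to the $\sim 2^{-j}$ ``uncertainty'' of a frequency-$2^{2j}$ function — note the mismatch $2^{-j/2}$ vs $2^{-j}$, which is what forces the smallness to be only $\delta$ rather than $0$, hence ``nearly tight'' rather than tight) that the commutator contributions are controlled by a constant multiple of $\delta$, uniformly in $j$; summing the almost-orthogonal pieces over $j$ via a Cotlar--Stein or a direct Schur-test argument on the kernels $K_j$ then closes the estimate. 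Everything else — measurability, the disjoint-cover property of $\{U_{j,i}\}$, and the uniform constants — is routine bookkeeping with the geometry of dyadic cubes and the smoothness of $\Gamma$.
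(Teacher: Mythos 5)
Your parts (1) and (2) are essentially fine in outline: bandlimitedness is immediate from the spectral-support of the multiplier (this is how the paper argues too), and your localization argument via rapid off-diagonal decay of the kernels of $\psi(2^{-2j}L)$ (finite propagation speed for $\cos(t\sqrt{L})$ with Dirichlet conditions) is a legitimate, in fact more quantitative, substitute for the paper's softer argument, which only uses that $\partial_x^{\alpha}\partial_y^{\beta}K^{g}_{t}(x,y)\to 0$ uniformly off the diagonal as $t\to 0$ (Lemma \ref{Lemma}, Proposition \ref{prop}, following Taylor). But part (3), which is the heart of the theorem, has a genuine gap. You reduce the frame bounds to an operator-norm estimate $\|E\|\le\delta$ for what you call commutator-type errors $[\Psi_j,\chi_{j,i}]$, to be handled by Cotlar--Stein or a Schur test, and you explicitly leave this as ``the main obstacle.'' The difficulty is not only that this step is unproved; it is misidentified. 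With a single scalar coefficient $\left<f,\varphi_{j,i}\right>$ per cube, the operator $P_jg=\sum_i\chi_{j,i}\left<g,\chi_{j,i}\right>$ appearing in your frame operator is \emph{not} multiplication by $\sum_i\chi_{j,i}^2$ up to commutators, and $P_j-I$ is not small in operator norm on $L_2(\Omega)$ (it annihilates everything oscillating inside a cube). It is small only on the bandlimited range $F_j(\sqrt{L})L_2(\Omega)$, and proving that is exactly a Plancherel--Polya/sampling inequality for $E_{\omega}(L)$ on the domain: the paper establishes it (Lemma \ref{GPIT}, Theorem \ref{Frame-th}) through a Poincar\'e-type inequality on cubes (Lemma \ref{LPIT}), a Sobolev extension operator across $\Gamma$, elliptic regularity and an interpolation inequality to trade $H^{j}(\Omega)$-norms for powers of $L$, and finally the Bernstein inequality $\|L^{m/2}f\|\le\omega^{m/2}\|f\|$ on $E_{\omega}(L)$, which is what forces the choice $\rho_j\sim\delta^{1/d}\omega_j^{-1/2}$ and produces the constant $1-\delta$. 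Your proposal contains none of this machinery, so the lower frame bound does not close. Relatedly, your diagnosis that near-tightness comes from the ``mismatch $2^{-j/2}$ vs $2^{-j}$'' is off the mark: the loss $\delta$ comes from the Poincar\'e/Bernstein remainder term, which is made $\le\delta\|f\|^2/3$ by choosing $\rho_j$ small, not from an irreducible commutator obstruction.

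A second, smaller divergence: the paper does not use a smooth partition of unity with $\sum_i\chi_{j,i}^2\equiv 1$. It takes $\Phi_{j,i}=|U_{j,i}|^{-1/2}\chi_{U_{j,i}}$, the normalized indicators of the disjoint pieces $U_{j,i}=Q_i(\rho_j)\cap\Omega$, and sets $\varphi_{j,i}=F_j(\sqrt{L})\Phi_{j,i}$; then $\left<f,\varphi_{j,i}\right>=\left<F_j(\sqrt{L})f,\Phi_{j,i}\right>$, the per-level sampling inequality (\ref{frame-ineq}) applies to $F_j(\sqrt{L})f\in E_{2^{2j+2}}(L)$, the upper bound $1$ comes from Cauchy--Schwarz on each $U_{j,i}$, and summation in $j$ uses only the exact Calder\'on identity $\sum_jF_j^2(\sqrt{L})=I$ --- no almost-orthogonality or Cotlar--Stein argument is needed anywhere. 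If you want to salvage your route, you would still have to prove the discrete norm equivalence for bandlimited functions on $\Omega$ at scale $\rho_j$, at which point you have reproduced the paper's key lemma.
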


In Theorem \ref{mainLemma2} this result is used to describe Besov norm of a function $f\in L_{2}(\Omega)$  in terms of frame coefficients   $\left<f,\varphi_{j,i}\right>$.

We don't discuss any reconstruction method of a function $f\in L_{2}(\Omega)$ from its projections  $\left<f,\varphi_{j,i}\right>$. However, since our frame is "nearly" tight  (at least when $\delta$ is close to zero) in practice one can use the same frame for reconstruction to have $f\approx \sum _{j,i}\left<f, \varphi_{j,i}\right>\varphi_{j,i}$. Another way for reconstruction is to use iterative the so called frame algorithm, which in this case will exhibit geometric convergence with factor $\delta^{n}(2-\delta)^{-n}$, where $n$ is the number of iteration steps. Another possibility for reconstruction  is through interpolation by variational splines exactly as it was done in \cite{Pes00}-\cite{Pes0}.

In section 3 we construct frames in spaces of bandlimited functions $E_{\omega}(L)=span\>\{u_{k}\},\>Lu_{k}=\lambda_{k}u_{k},\>\lambda_{k}\leq \omega,$ in a way that their frame constants are independent on $\omega$ (Theorem \ref{Frame-th}). It is important to note that in Theorem  \ref{Frame-th} which provides  descritization of the norm in a space $E_{\omega}(L)$   a number of "samples" $\Phi_{i}(f)$ is approximately $|\Omega|\omega^{d/2},$  which according to the Weyl's asymptotic formula \cite{Hor}, \cite{T},
$$
dim\>E_{\omega}(L)\sim |\Omega|\omega^{d/2},
$$
 is essentially the dimension of  the space $E_{\omega}(L)$. In this sense Theorem \ref{Frame-th} is optimal.

In section 4 we represent functions in terms of appropriate bandlimited components and apply Theorem \ref{Frame-th}. Localization of frame elements follows from well known properties of spectral projectors for self-adjoint elliptic second-order differential operators on manifolds \cite{Hor}, \cite{T}. In section 5 we introduce Besov spaces as interpolation spaces between $L_{2}(\Omega)$ and domains of powers of $L$ and characterize them in terms of frame coefficients. Our  Theorem \ref{approx} and Theorem 5.3 in this section heavily depend on a powerful results of the general theory of interpolation of linear operators. Direct proofs of even more general results can be found in \cite{Pes09}, \cite{Pes11},  \cite{AI}. A treatment of the full scale of Besov spaces is much more involved and will appear in a separate paper.

The most  important fact for our construction of frames  is that in a space of $\omega$-bandlimited functions $E_{\omega}(L)$ 
the continuous and discrete norms are equivalent. This result in the case of compact and non-compact manifolds of bounded geometry was  discovered and explored in many ways in our papers \cite{Pes98}-\cite{Pes09}. In the classical cases of straight line $\mathbb{R}$ and  circle $\mathbb{S}$ the corresponding results are known as Plancherel-Polya and Marcinkiewicz-Zygmund inequalities. Our generalization of Plancherel-Polya and Marcinkiewicz-Zygmund inequalities  implies  that $\omega$-bandlimited functions  on manifolds of bounded geometry are completely determined by the values of their averages over "small" sets  "uniformly" distributed over $M$ with a spacing comparable to $1/\sqrt{\omega}$ and can be completely reconstructed in a stable way from such sets of values. The last statement is an extension of the Shannon sampling theorem to the case of Riemannian manifolds of bounded geometry.

The present paper is the first systematic development of bandlimited localized frames and their relations to Besov spaces on general domains. Several  approaches to frames on the unit ball in $\mathbb{R}^{d}$ were considered in \cite{PX}, \cite{LMc}, \cite{LPS} but their methods and results are very different from ours. 

 Most of our proofs and results hold for general compact  Riemannian manifolds without boundary and even for non-compact manifolds of bounded geometry. We do not discuss such manifolds in this paper since for, say,  compact closed manifolds  nearly   tight bandlimited and localized frames were already developed in \cite{gm4}. Moreover, in the case of homogeneous compact manifolds bandlimited and localized {\bf tight }frames were constructed in \cite{gpes}. 
  In the following papers a number of frames  was constructed in different function spaces on closed compact  manifolds  and on non-compact manifolds     \cite{CGSM},  \cite{CO1}-\cite{CKP}, \cite{FM1}- \cite{gpes},   \cite{NPW1}-\cite{Pessubm}.     Necessary conditions for sampling and interpolation  in terms of Beurling-Landau densities on compact manifolds were obtained in  \cite{OP}.

  Applications of frames on manifolds to scattering theory, statistic and cosmology  can be found in  \cite{BKMP}, \cite{GM1}, \cite{GM2}, \cite{LMc}, \cite{LPS},  \cite{MP},  \cite{M-all}.  
 
 \textbf{Acknowledgement:} The relevance of frames on manifolds  to cosmology was discussed with Domenico Marinucci during my  visit to the University of Rome Tor Vergata in May 2012. I would like to thank him for inviting me  and for numerous conversations, which stimulated my interest to frames on domains in Euclidean spaces.

  \section{Bounded domains with smooth boundaries and operators}
 
 We consider   bounded domains $\Omega\subset \mathbb{R}^{d}$ with a smooth boundaries  $\Gamma$ which are  smooth $(d-1)$-dimensional oriented manifolds.  Let $\overline{\Omega}=\Omega\cup \Gamma$ and $L_{2}(\Omega)$ be  the  space of functions  square-integrable with respect to Lebesgue  measure $dx=dx_{1}...dx_{d}$ with the norm denoted as $\|\cdot \|$. If $k$ is a natural number the notations $H^{k}(\Omega)$ will be  used for  the Sobolev space of distributions on $\Omega$ with the norm
 $$
 \|f\|_{H^{k
 }(\Omega)}=\left(\|f\|^{2}+\sum _{1\leq |\alpha |\leq k}\|\partial^{\alpha} f\|^{2}\right)^{1/2}
 $$
where $\alpha=(\alpha_{1},...,\alpha_{d})$ and $\partial^{\alpha}$ is a mixed partial derivative 
$$
\left(\frac{\partial}{\partial x_{1}}\right)^{\alpha_{1}}...\left(\frac{\partial}{\partial x_{d}}\right)^{\alpha_{d}}.
$$
Under our assumptions the space $C^{\infty}_{0}(\overline{\Omega})$ of infinitely smooth functions with support in $\overline{\Omega}$  is dense  
in $H^{k}(\Omega)$. Closure in $H^{k}(\Omega)$ of the space   $C_{0}^{\infty}(\Omega)$ of smooth functions with support in $\Omega$ will be denoted as $H_{0}^{k}(\Omega)$.

 Since $\Gamma$ can be treated as a smooth Riemannian manifold one can introduce Sobolev scale of spaces $H^{s}(\Gamma),\>\> s\in \mathbb{R},$ as, for example, the domains of the Laplace-Beltrami operator $\mathcal{L}$ of a Riemannian metric on $\Gamma$.
 
 According to the trace theorem there exists a well defined continuous surjective trace operator 
 $$
 \gamma: H^{s}(\Omega)\rightarrow H^{s-1/2}(\Gamma),\>\>s>1/2,
 $$
 such that for all functions $f$  in $H^{s}(\Omega)$ which are smooth up to the boundary the value $\gamma f$ is simply a restriction of $f$ to $\Gamma$.

 One considers the operator (\ref{Op}) 
with coefficients in $C^{\infty}(\Omega)$ where the matrix $(a_{j,k}(x))$ is real, symmetric  and positive definite on $\overline{\Omega}$.
 The operator $L$ is defined as the Friedrichs extension of $L$, initially defined on $C_{0}^{\infty}(\Omega)$, to the set of all functions $f$ in $H^{2}(\Omega)$ with constrain $\gamma f=0$. The Green formula implies that this operator is self-adjoint.  The domain of its positive square root $L^{1/2}$ is the set of all functions $f$ in $H^{1}(\Omega)$ for which $\gamma f=0$. 
  
 Thus, one obtains a self-adjoint positive definite operator in the Hilbert space $L_{2}(\Omega)$ with a discrete spectrum $0<\lambda_{1}\leq \lambda_{2},...$ which goes to infinity.

\section{Average sampling and bandlimited frames on domains}

Let $Q(\rho),\>Q(2\rho)\subset \mathbb{R}^{d}$ be standard  cubes of diameters $\rho$ and $2\rho$ respectively with centers at zero.  Let $U\subset Q(\rho)$  be a closed set and $d\mu$ be a positive measure on $U$. We will assume that
the total measure of $U$ is finite and not zero, i.e.
$$
0<|U|=\int_{U}d\mu<\infty.
$$
We consider the following distribution on
$C^{\infty}(Q(\rho)),$
\begin{equation}
\Psi(\varphi)=\frac{1}{|U|}\int_{U}\varphi d\mu,\>\>\>|U|=\int_{U}d\mu,\>\>\>\>\varphi \in C_{0}^{\infty}(Q(\rho)).
\end{equation}

Some examples of such  distributions which are of particular
interest to us are the following.

1) Weighted Dirac measures. In this case $U=\{x\},\>\> x\in
Q(\rho),$ measure $d\mu$ is any non-zero number $\mu$ and
$\Psi(f)=\mu \delta_{x}(f)=\mu f(x).$

2) Finite or infinite sequences of Dirac measures $\delta_{j},
x_{j}\in Q(\rho),$ with corresponding weights $\mu_{j}$.
In this case $U=\bigcup_{j}\{x_{j}\}$ and
$$
\Psi(f)=\sum_{j}\mu_{j}\delta_{x_{j}}(f),
$$
where we assume the following
$$
0<|U|=\sum_{j}|\mu_{j}|<\infty,\>\>\>\> U=\bigcup_{j}\{x_{j}\}.
$$

3) $U$ is a smooth submanifold in $Q(\rho)$ of any
codimension  and $d\mu$ is its "surface" measure.

4) $U$ is a measurable subset of $Q(\rho)$ , $d\mu$ is the Lebesgue  measure $dx$, and $|U|\neq 0$.

The following statement is an analog of the Poincar\'e inequality.
 \begin{lem}\label{LPIT}
For any $m>d/2$  there exists a  constant $C=C(d, m)>0$ 
 such that  the following inequality holds true
\begin{equation}\label{LPII}
\|f-\Psi(f)\|^{2}_{L_{2}(U)}\leq
C(d,m)\sum_{1\leq |\alpha|\leq
m}\rho^{2\alpha}\|\partial^{\alpha}f\|^{2}_{L_{2}(Q(2\rho))},
\end{equation}
for all  $f\in H^{m}(\Omega), m>d/2,$ where $\alpha=(
\alpha_{1},...,\alpha_{d})$, and $\partial^{\alpha}f=\partial_{x_{1}}^{\alpha_{1}}...\partial_{x_{d}}^{\alpha_{d}}f$ is a partial
derivative of order $|\alpha|=\alpha_{1}+...+\alpha_{d}$.
\end{lem}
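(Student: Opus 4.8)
The plan is to reduce, by a rescaling, to the normalized situation $\rho=1$, and there to combine two classical facts: the minimizing property of the mean value $\Psi(f)$, and the Sobolev embedding $H^{m}(Q(2))\hookrightarrow C(\overline{Q(2)})$, which is available precisely because $m>d/2$. (In the language of finite element theory this is a Bramble--Hilbert-type estimate.)

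First I would note that $\Psi(f)$ is the best $L_{2}(U,d\mu)$-approximation of $f$ by constants: expanding $\int_{U}|f-c|^{2}\,d\mu$ as a quadratic function of $c$ shows it is minimized at $c=\frac{1}{|U|}\int_{U}f\,d\mu=\Psi(f)$. Hence for \emph{any} constant $c$,
$$
\|f-\Psi(f)\|^{2}_{L_{2}(U)}\le\int_{U}|f-c|^{2}\,d\mu\le|U|\,\sup_{\overline{Q(2\rho)}}|f-c|^{2},
$$
where we used $U\subseteq Q(\rho)\subseteq Q(2\rho)$ and the hypothesis $|U|=\int_{U}d\mu<\infty$. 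The point of passing to the sup-norm is that $U$ may be lower-dimensional — a single point, a sequence of points, or a submanifold of any codimension — so no trace inequality will suffice; one genuinely needs pointwise control of $f-c$, and this is exactly what $m>d/2$ buys.

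Next, in the case $\rho=1$: choose $c=\overline{f}$, the Lebesgue average of $f$ over $Q(2)$. The Sobolev embedding on the fixed cube $Q(2)$ gives $\sup_{\overline{Q(2)}}|f-\overline{f}|\le C(d,m)\|f-\overline{f}\|_{H^{m}(Q(2))}$, and since $\overline{f}$ is constant,
$$
\|f-\overline{f}\|^{2}_{H^{m}(Q(2))}=\|f-\overline{f}\|^{2}_{L_{2}(Q(2))}+\sum_{1\le|\alpha|\le m}\|\partial^{\alpha}f\|^{2}_{L_{2}(Q(2))};
$$
the classical Poincar\'e inequality on the convex set $Q(2)$ bounds the first term on the right by $C(d)\sum_{|\alpha|=1}\|\partial^{\alpha}f\|^{2}_{L_{2}(Q(2))}$. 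Chaining these with the display of the previous paragraph yields \eqref{LPII} for $\rho=1$ with a constant depending only on $d$ and $m$, since all the reference constants (Sobolev, Poincar\'e) are attached to the fixed pair $Q(1)\subset Q(2)$.

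Finally I would obtain the general $\rho$ by the dilation $x=\rho y$: apply the $\rho=1$ estimate to $g(y)=f(\rho y)$ on $Q(2)$, with $U$ and $d\mu$ pulled back to $\rho^{-1}U\subseteq Q(1)$; the left-hand side is unchanged, while $\partial^{\alpha}g(y)=\rho^{|\alpha|}(\partial^{\alpha}f)(\rho y)$ produces exactly the weights $\rho^{2|\alpha|}$ on the right. The only genuinely delicate ingredient is the requirement $m>d/2$, which is what makes point evaluation — and hence the functional $\Psi$ together with the pointwise bound above — continuous on $H^{m}(Q(2))$; everything else is routine bookkeeping of constants under scaling.
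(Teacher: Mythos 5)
Your route --- rescale to $\rho=1$, use that the $\mu$-mean $\Psi(f)$ is the best constant approximation, pass to the sup-norm via the Sobolev embedding $H^{m}(Q(2))\hookrightarrow C(\overline{Q(2)})$ (this is where $m>d/2$ enters), remove the zero-order term by the Poincar\'e inequality, and scale back --- is genuinely different from the paper's argument, which expands $f(x)-\Psi(f)$ by the Taylor formula with integral remainder, integrates in $d\mu(y)$, and estimates the remainder via the Minkowski and Schwartz inequalities in spherical coordinates. Your scheme is viable, but as written it silently drops two factors that do not cancel at the stated level of generality. First, your opening display leaves the total mass $|U|=\int_{U}d\mu$ on the right-hand side, and nothing in the hypotheses bounds it by a constant $C(d,m)$: the measure $\mu$ is an arbitrary finite positive measure (weighted Dirac masses, surface measures). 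Second, the rescaling is misstated: with $g(y)=f(\rho y)$ one has
$$
\|\partial^{\alpha}g\|^{2}_{L_{2}(Q(2))}=\rho^{2|\alpha|-d}\,\|\partial^{\alpha}f\|^{2}_{L_{2}(Q(2\rho))},
$$
so an extra Jacobian factor $\rho^{-d}$ appears, not ``exactly the weights $\rho^{2|\alpha|}$.'' What your argument actually proves is (\ref{LPII}) with the constant $C(d,m)\,|U|\,\rho^{-d}$ in place of $C(d,m)$.

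This is harmless exactly when $|U|\leq C\rho^{d}$, for instance when $d\mu$ is Lebesgue measure on $U=Q_{i}(\rho)\cap\Omega$, which is the only case used later (Lemma \ref{GPIT} and the functionals (\ref{functionals-2})); in that case your proof, with the bookkeeping repaired, is a clean alternative to the paper's computation, since the two factors cancel. But for the general measures admitted in the setup the factor is not removable if, as your best-approximation step requires, the left-hand norm is read with respect to $d\mu$: for $d\geq 2$ take $f(x)=x_{1}$ and let $\mu$ be the surface measure of the slice $\{x_{d}=0\}\cap Q(\rho)$; then the left side of (\ref{LPII}) is of order $\rho^{d+1}$ while the right side is of order $\rho^{d+2}$, so no constant depending only on $(d,m)$ can work (at fixed $\rho$, multiplying the weights of a pair of Dirac masses has the same effect). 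So either state and prove the lemma for $d\mu=dx$, which is all that is needed in the sequel, or read the left-hand norm with respect to $dx$ on $U$; in the latter case replace the best-approximation step by the trivial bound $|\Psi(f)-c|\leq\sup_{U}|f-c|$, after which your sup-norm argument goes through and the factor $|U|\rho^{-d}\leq 1$ absorbs both discrepancies.
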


\begin{proof}
For any $f\in
C^{\infty}(\Omega)$ and every
 $x,\>y\in U\subset Q(\rho)$, we have the
following
$$
f(x)=f(y)+\sum_{1\leq|\alpha|\leq m-1} \frac{1}{\alpha
!}\partial^{\alpha}f(y)(x-y) ^{\alpha}+
$$
\begin{equation}\label{TF}
\sum_{|\alpha|=m}\frac{1}{\alpha !}\int_{0}^{\eta}t^{m-1}\partial
^{\alpha}f(y+t\vartheta)\vartheta^{\alpha}dt,
\end{equation}
where $x=(x_{1},...,x_{d}), y=(y_{1},...,y_{d}), \alpha=(
\alpha_{1},...,\alpha_{d}),
(x-y)^{\alpha}=(x_{1}-y_{1})^{\alpha_{1}}...
(x_{d}-y_{d})^{\alpha_{d}}, \eta=\|x-y\|, \vartheta=(x-y)/\eta.$

We integrate over
 $U\subset Q(\rho)$ with respect  to $d\mu( y)$. It gives
$$
f(x)-\Psi(f)=|U|^{-1}\int_{U}\left(\sum_{1\leq|\alpha|\leq
m-1} \frac{1}{\alpha !}\partial^{\alpha}f(y)(x-y)
^{\alpha}\right)d\mu(y)+
$$
$$|U|^{-1}\int_{U}\left(
\sum_{|\alpha|=m}\frac{1}{\alpha!}\int_{0}^{\eta}t^{m-1}\partial
^{\alpha}f(y+t\vartheta)\vartheta^{\alpha}dt\right)d\mu(y).
$$
From here we obtain
$$
\|f-\Psi(f)\|_{L_{2}(U)}\leq
C(m)|U|^{-1}\sum_{1\leq|\alpha|\leq
m-1}\left(\int_{U}\left(
\int_{U}|\partial^{\alpha}f(y)(x-y)
^{\alpha}|d\mu(y)\right)^{2}dx\right)^{1/2}+
$$
\begin{equation}\label{RT}
C(m)|U|^{-1}
\sum_{|\alpha|=m}\left(\int_{U}\left(\int_{U}\left |\int_{0}^{\eta}t^{m-1}\partial
^{\alpha}f(y+t\vartheta)\vartheta^{\alpha}dt\right |d\mu(y)\right)^{2}dx\right)^{1/2}=I+II.
\end{equation}
By Minkowski  we obtain that 
\begin{equation}\label{1term}
I\leq C(d,m)\sum_{1\leq 
|\alpha|\leq m-1}\rho^{|\alpha|}\|\partial^{\alpha}f\|_{L_{2}(Q(2\rho))},\>\>\>m>d/2.
\end{equation}
By the  Schwartz inequality using the  assumption $m>d/2$ one can  obtain the following inequality for $|\alpha|=m$
$$
\left |\int_{0}^{\eta}t^{m-1}\partial
^{\alpha}f(y+t\vartheta)\vartheta^{\alpha}dt\right |\leq
C\eta^{m-d/2}\left(\int_{0}^{\eta}t^{d-1}|\partial^{\alpha}
f(y+t\vartheta)|^{2}dt\right)^{1/2}.
$$
Thus, the Minkowski inequality gives that
$$
II\leq C(m)|U|^{-1}
\sum_{|\alpha|=m}\int_{U}\left(\int_{U}\left |\int_{0}^{\eta}t^{m-1}\partial
^{\alpha}f(y+t\vartheta)\vartheta^{\alpha}dt\right 
|^{2}dx\right)^{1/2}d\mu(y)\leq
$$
$$
C(m)|U|^{-1}\sum_{|\alpha|=m}\left(\int_{U}\left(\int_{U}
\eta^{2m-d}\int_{0}^{\eta}t^{d-1}|\partial^{\alpha}f(y+t\vartheta)|^{2}dt
\right)dx\right)^{1/2}d\mu(y).
$$
We integrate over 
$Q(\rho)$ using the spherical coordinate system
$(\eta,\vartheta).$ Since $\eta\leq \rho$ for $|\alpha|=m$ we obtain
\begin{equation}\label{RT-2}
\int_{0}^{\rho/2}\eta^{d-1}\int_{|\theta|=1}
\left |\int_{0}^{\eta}t^{m-1}\partial
^{\alpha}f(y+t\vartheta)\vartheta^{\alpha}dt\right |^{2} d\vartheta
d\eta\leq
$$
$$C(d,m)\int_{0}^{\rho/2}t^{d-1}\left(\int_{|\theta|=1}\int_{0}^{\rho}
\eta^{2m-d}|\partial^{\alpha}f(y+t\vartheta)|^{2}
\eta^{d-1}d\eta d\vartheta\right)dt\leq
$$
$$
C(d,m)\rho^{2m}\|\partial^{\alpha}
f\|^{2}_{L_{2}(Q(2\rho))}.
\end{equation}
The result follows from (\ref{1term}) and (\ref{RT-2}).

\end{proof}

Let  $\{Q_{k}(\rho)\}$ be a collection of cubes of type (\ref{cube-0}). Thus, two cubes from this family can intersect only over their boundaries.
 Set 
 $
 U_{k}(\rho)=Q_{k}(\rho)\cap \Omega
 $ 
 and let  $\{U_{i}(\rho)\}$ be a subcollection of all $U_{k}(\rho)$ which have positive measure. Obviously,   the collection $\{U_{i}(\rho)\}$ is  a cover of $\Omega$ and $
 diam\>U_{i}(\rho)\leq \sqrt{d}\rho$. Thus,
 \begin{equation}\label{cover}
 U_{i}=U_{i}(\rho)=Q_{i}(\rho)\cap \Omega,\>\>\>\>\> \bigcup_{i}U_{i}=\Omega,\>\>\>\>diam\>U_{i}(\rho)\leq \sqrt{d}\rho.
 \end{equation}
 Next, we introduce a family $\Psi=\{\Psi_{i}\}$ of functionals on $L_{2}(\Omega)$ where every functional  has the form
 \begin{equation}\label{functionals-2}
 \Psi_{i}(f)=\frac{1}{|U_{i}|}\int_{U_{i}}f(x)dx,\>\>\>f\in L_{2}(\Omega),\>\>\>\>|U_{i}|=\int_{U_{i}}dx.
 \end{equation}
 
\begin{lem}\label{GPIT}
For any $m>d/2$ there exist  constants $c=c(\Omega, L, m),\>C=C(\Omega, L, m)$  such that  for 
any given $0<\delta<1$ 
if 
$
\rho< c\delta
$
then the following inequality holds
\begin{equation}\label{GPI}
(1-2\delta/3)\|f\|^{2}_{L_{2}(\Omega)} \leq 
\sum_{i}|U_{i}| |\Psi_{i}(f)|^{2}+C\rho^{2m}\delta^{-1}\|L^{m/2}f\|^{2}_{L_{2}(\Omega)}
\end{equation}
for all $f\in \mathcal{D}(L^{m/2})$.
\end{lem}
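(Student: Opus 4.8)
The plan is to exploit the fact that on each piece $U_{i}$ the functional $\Psi_{i}$ is exactly the orthogonal projection of $L_{2}(U_{i})$ onto the constants, so that the quantity to be controlled is a sum of local oscillations, and then to bound those oscillations by the Poincar\'e-type Lemma \ref{LPIT} together with elliptic regularity for $L$.

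First, since $\Psi_{i}(f)$ is constant on $U_{i}$ and $\int_{U_{i}}(f-\Psi_{i}(f))\,dx=0$, the functions $\Psi_{i}(f)$ and $f-\Psi_{i}(f)$ are orthogonal in $L_{2}(U_{i})$, whence $\|f\|_{L_{2}(U_{i})}^{2}=|U_{i}|\,|\Psi_{i}(f)|^{2}+\|f-\Psi_{i}(f)\|_{L_{2}(U_{i})}^{2}$. Summing over $i$ and using that $\{U_{i}\}$ covers $\Omega$ and is disjoint up to a null set (see (\ref{cover})) gives the exact identity
$$
\|f\|_{L_{2}(\Omega)}^{2}=\sum_{i}|U_{i}|\,|\Psi_{i}(f)|^{2}+\sum_{i}\|f-\Psi_{i}(f)\|_{L_{2}(U_{i})}^{2},
$$
so (\ref{GPI}) is equivalent to the oscillation estimate $\sum_{i}\|f-\Psi_{i}(f)\|_{L_{2}(U_{i})}^{2}\le\frac{2\delta}{3}\|f\|_{L_{2}(\Omega)}^{2}+C\rho^{2m}\delta^{-1}\|L^{m/2}f\|_{L_{2}(\Omega)}^{2}$, and everything reduces to bounding the left-hand side.

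Next I would apply Lemma \ref{LPIT} on each $U_{i}\subset Q_{i}(\rho)$. Since $f$ lives only on $\Omega$ while the doubled cubes $Q_{i}(2\rho)$ may stick out of $\Omega$, I first fix a bounded extension operator $E\colon H^{m}(\Omega)\to H^{m}(\mathbb{R}^{d})$ (available because $\Gamma$ is smooth), note that $Ef=f$ on $U_{i}$, hence $\Psi_{i}(Ef)=\Psi_{i}(f)$, and apply Lemma \ref{LPIT} to $Ef$. This yields $\|f-\Psi_{i}(f)\|_{L_{2}(U_{i})}^{2}\le C(d,m)\sum_{1\le|\alpha|\le m}\rho^{2|\alpha|}\|\partial^{\alpha}(Ef)\|_{L_{2}(Q_{i}(2\rho))}^{2}$. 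The cubes $\{Q_{i}(2\rho)\}$ overlap with multiplicity at most a dimensional constant $N_{d}$, so summing over $i$ and using $\|Ef\|_{H^{k}(\mathbb{R}^{d})}\le C_{E}\|f\|_{H^{k}(\Omega)}$ produces
$$
\sum_{i}\|f-\Psi_{i}(f)\|_{L_{2}(U_{i})}^{2}\le C\sum_{k=1}^{m}\rho^{2k}\,\|f\|_{H^{k}(\Omega)}^{2}.
$$

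Finally I would absorb this sum. Elliptic regularity for the Friedrichs (Dirichlet) realization of $L$ gives $\mathcal{D}(L^{m/2})\subset H^{m}(\Omega)$ and $\|f\|_{H^{k}(\Omega)}^{2}\le C\|(I+L)^{k/2}f\|^{2}$ for $0\le k\le m$, in particular $\|f\|_{H^{m}}^{2}\le C(\|L^{m/2}f\|^{2}+\|f\|^{2})$. For $1\le k<m$ I would use an Ehrling-type interpolation inequality $\|f\|_{H^{k}}^{2}\le\tau\|f\|_{H^{m}}^{2}+C\tau^{-k/(m-k)}\|f\|^{2}$ and choose $\tau=\tau_{k}$ of order $\rho^{2(m-k)}\delta^{-1}$, so that $\rho^{2k}\tau_{k}\sim\rho^{2m}\delta^{-1}$ while $\rho^{2k}\tau_{k}^{-k/(m-k)}$ becomes a power of $\delta$; combined with the bound for $\|f\|_{H^{m}}^{2}$ this estimates $\sum_{k=1}^{m}\rho^{2k}\|f\|_{H^{k}}^{2}$ by $C\rho^{2m}\delta^{-1}\|L^{m/2}f\|^{2}+A(\rho,\delta)\|f\|^{2}$, where $A(\rho,\delta)$ collects the residual factors $\rho^{2m}\delta^{-1}$ and $\delta^{k/(m-k)}$. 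Using $\rho<c\delta$ and $\delta<1$ one has $\rho^{2m}\delta^{-1}<c^{2m}\delta$, and the aim is to choose $c=c(\Omega,L,m)$ (and treat the low-order terms with some care) so that $A(\rho,\delta)\le\frac{2\delta}{3}$; inserting this into the identity of the first step gives $(1-\frac{2\delta}{3})\|f\|^{2}\le\sum_{i}|U_{i}|\,|\Psi_{i}(f)|^{2}+C\rho^{2m}\delta^{-1}\|L^{m/2}f\|^{2}$, which is (\ref{GPI}).

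The step I expect to be the real obstacle is this last balancing: the lower-order terms $\rho^{2k}\|f\|_{H^{k}}^{2}$ with $1\le k<m$ must be split term by term so that the portion thrown onto $\rho^{2m}\delta^{-1}\|L^{m/2}f\|^{2}$ has precisely that order, while the residual multiple of $\|f\|^{2}$ stays below $\frac{2\delta}{3}$; this is the only place where the hypothesis $\rho<c\delta$ and the smallness of $c$ are actually used, and it is where the interplay of the two small parameters $\rho$ and $\delta$ has to be tracked carefully. Everything else — the orthogonality of the decomposition, the extension and bounded-overlap bookkeeping in the use of Lemma \ref{LPIT}, and the elliptic estimate $\|f\|_{H^{k}}\le C\|(I+L)^{k/2}f\|$ — is routine.
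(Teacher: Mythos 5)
Your overall route coincides with the paper's: split each local $L_{2}$ norm into the mean part and the oscillation, estimate the oscillation through Lemma \ref{LPIT} applied to an extension $\widetilde f$, and then absorb $\sum_{k=1}^{m}\rho^{2k}\|f\|^{2}_{H^{k}(\Omega)}$ by elliptic regularity plus an interpolation inequality. Your first step is in fact cleaner than the paper's: because $\Psi_{i}(f)$ is the Lebesgue average over $U_{i}$, the exact Pythagoras identity is available, whereas the paper uses the elementary inequality (\ref{ineq-1}) with $\alpha=\delta/3$ and therefore carries an extra factor $3/\delta$ on the oscillation sum. The extension, bounded-overlap and regularity bookkeeping are the same as in the paper (cf. (\ref{reg})).

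The gap is exactly where you located it, and it is not merely a matter of care. With $\tau_{k}\sim\rho^{2(m-k)}\delta^{-1}$ the residual produced by the $k$-th term is of size $\delta^{k/(m-k)}\|f\|^{2}$; for $k<m/2$ the exponent $k/(m-k)$ is smaller than $1$, so this is not $O(\delta)$ and cannot be brought under $\frac{2\delta}{3}\|f\|^{2}$ for small $\delta$, no matter how small $c$ is chosen. Worse, no other splitting can meet your targets: if $\rho^{2k}\|f\|^{2}_{H^{k}}\leq \epsilon_{k}\|f\|^{2}+A_{k}\|L^{m/2}f\|^{2}$ held for all $f\in\mathcal{D}(L^{m/2})$, then testing on eigenfunctions $u_{\lambda}$ (for which $\|u_{\lambda}\|^{2}_{H^{k}}\geq c_{0}\lambda^{k}$, since $\|L^{k/2}u\|\leq C\|u\|_{H^{k}}$) and optimizing over $\lambda$ forces $\rho^{2km}\leq C\,\epsilon_{k}^{m-k}A_{k}^{k}$; with $\epsilon_{k}\leq C\delta$ and $A_{k}\leq C\rho^{2m}\delta^{-1}$ this reduces to $\delta^{m-2k}\geq c_{1}>0$, which fails for $k<m/2$ as $\delta\to 0$ (the optimizing $\lambda$ tends to infinity when $\rho<c\delta$, and eigenvalues near it exist by Weyl's law). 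So for $m\geq 3$ a term-by-term absorption with residual $O(\delta)\|f\|^{2}$ and leading coefficient $O(\rho^{2m}\delta^{-1})$ is unattainable. This is precisely the step the paper performs via (\ref{reg}) and (\ref{interpol}) with the choice $a=6C''/\delta$, and it is delicate there as well: if one keeps track of the factor $4a^{m-j}$ in (\ref{interpol}), that computation actually delivers a right-hand side of the form $C\rho^{2m}\delta^{-(2m-1)}\|L^{m/2}f\|^{2}$ (your Pythagoras reduction would give $C\rho^{2m}\delta^{-(m-1)}$), not $C\rho^{2m}\delta^{-1}$. The realistic ways to close your argument are either to prove the lemma with such a weaker power of $\delta^{-1}$ --- which is what this method yields and which still serves the later application in Theorem \ref{Frame-th} after adjusting the power of $\delta$ in the choice of $\rho$ --- or to find a genuinely different mechanism; as proposed, the balancing cannot deliver (\ref{GPI}) with the stated $\delta^{-1}$.
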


\begin{proof}

We will need the  inequality (\ref{ineq-1}) below. 
One has  for all $\alpha>0$
$$
|A|^{2}=|A-B|^{2}+2|A-B||B|+|B|^{2},\>\>\>\>\>
2|A-B||B|\leq\alpha^{-1}|A-B|^{2}+\alpha|B|^{2},
$$
which imply  the inequality 
$$
(1+\alpha)^{-1}|A|^{2}\leq\alpha^{-1}|A-B|^{2}+|B|^{2},\>\>\alpha>0.
$$
If, in addition, $0<\alpha<1$, then    one has 
\begin{equation}\label{ineq-1}
(1-\alpha)|A|^{2}\leq \frac{1}{\alpha}|A-B|^{2}+|B|^{2},\>\>0<\alpha<1.
\end{equation}
Applying  inequality  (\ref{ineq-1}) we obtain
\begin{equation}\label{ineq-2}
(1-\alpha)\|f\|^{2}_{L_{2}(\Omega)}\leq\sum_{i}
(1-\alpha)\|f\|^{2}_{L_{2}(U_{i})}\leq 
$$
$$
\alpha^{-1}\sum_{i}\|f-\Psi_{i}(f)\|^{2} _{L_{2}(U_{i})}+
\sum_{i}|U_{i}||\Psi_{i}(f)|^{2},\>\>\>|U_{i}|=\int_{U_{i}} dx.
\end{equation}
Since $\Omega$ has a smooth boundary, there  exist a linear continuous extension operator (see \cite{LM},  Sec. 8.1)
$$
H^{k}(\Omega)\rightarrow H^{k}(\mathbb{R}^{d}),\>\>\>\>f\rightarrow \widetilde{f}\in  H^{k}(\mathbb{R}^{d}).
$$
Note, (see \cite{Hor},  Sec. 17.5), that the following continuous embedding holds $ \mathcal{D}(L^{k/2})\subset H^{k}(\Omega),\>\>\>\>k\in \mathbb{N}$, holds, where $ \mathcal{D}(L^{k/2})$ is considered with the graph norm. 

Thus, if $f\in \mathcal{D}(L^{m/2})$, then according to Lemma \ref{LPIT} one has for every $i$:
$$
\|f-\Psi_{i}(f)\|^{2}_{L_{2}(U_{i})}=\|\widetilde{f}-\Psi_{i}(\widetilde{f})\|^{2}_{L_{2}(U_{i})}\leq
C(d,m)\sum_{1\leq \alpha\leq
m}\rho^{2\alpha}\|\partial^{\alpha}\widetilde{f}\|^{2}_{L_{2}(Q_{i}(2\rho))}.
$$
Applying  (\ref{ineq-2}) with $\alpha=\delta/3$ and summing over $i$ we obtain the  following 
$$
(1-\delta/3) \|f\|^{2}_{L_{2}(\Omega)}\leq 
\sum_{i}|U_{i}| |\Psi_{i}(f)|^{2}+
\frac{3C(\Omega,m)}{\delta}
\sum_{1\leq j\leq
m}\rho^{2j}\|\widetilde{f}\|^{2}_{H^{j}(\cup_{i}Q_{i}(2\rho))}.
$$
Since there exists a $C(\Omega, m)$ such that for all $1\leq j\leq m$
$$
\|\widetilde{f}\|^{2}_{H^{j}(\cup_{i}Q_{i}(2\rho))}\leq \|\widetilde{f}\|^{2}_{H^{j}(\mathbb{R}^{d})}\leq C(\Omega,m)\|f\|^{2}_{H^{j}(\Omega)}
$$
we obtain
$$
(1-\delta/3) \|f\|^{2}_{L_{2}(\Omega)}\leq \sum_{i}|U_{i}| |\Psi_{i}(f)|^{2}+
\frac{C^{'}(\Omega,m)}{\delta}
\sum_{1\leq j\leq
m}\rho^{2j}\|f\|^{2}_{H^{j}(\Omega)}.
$$
The regularity theorem for the elliptic second-order differential   operator $L$ (see \cite{Hor},  Sec. 17.5)
 \begin{equation}\label{reg}
\|f\|^{2}_{H^{j}(\Omega)}\leq b\left(\|f\|^{2}_{L_{2}(\Omega)}+\|L^{j/2}f\|^{2}_{L_{2}(\Omega)}\right),\>\>f\in \mathcal{D}(L^{m/2}),\>\>b=b(\Omega, L, j),
\end{equation}
and the following interpolation inequality (see \cite{Hor},  Sec. 17.5)
\begin{equation}\label{interpol}
\rho^{2j}\|L^{j/2}f\|^{2}_{L_{2}(\Omega)}\leq 4a^{m-j}\rho^{2m}\|L^{m/2}f\|^{2}_{L_{2}(\Omega)}
+ca^{-j}\|f\|^{2}_{L_{2}(\Omega)}, \>\>\>c=c(\Omega, L, m),
\end{equation}
which holds for any $a,\>\rho>0, \>0\leq j\leq m$, imply that  there exists a constant  $C^{''}=C^{''}(\Omega, L, m)$ such that the next inequality takes place
$$
(1-\delta/3)\|f\|^{2}_{L_{2}(\Omega)}\leq 
\sum_{i}|U_{i}| |\Psi_{i}(f)|^{2}+
$$
$$
C^{''}\left(\rho^{2}\delta^{-1}\|f\|^{2}_{L_{2}(\Omega)}+\rho^{2m}\delta^{-1}\|L^{m/2}f\|^{2}_{L_{2}(\Omega)}+a^{-1}\|f\|^{2}_{L_{2}(\Omega)}\right),
$$
where $ m>d/2.$  By choosing $a=(6C^{''}/\delta)>1$ we obtain, that there exists   a  constant  $C^{'''}=C^{'''}(\Omega, L, m)$ such that for any $0<\delta<1$ and $\>\>\>\rho>0$
$$
(1-\delta/2)\|f\|^{2}_{L_{2}(\Omega)}\leq 
\sum_{i} |U_{i}||\Psi_{i}(f)|^{2}+C^{'''}\left(\rho^{2}\delta^{-1}\|f\|^{2}_{L_{2}(\Omega)}+\rho^{2m}\delta^{-1}\|L^{m/2}f\|^{2}_{L_{2}(\Omega)}\right).
$$
 The last inequality shows, that if for  a given $0<\delta<1$ the value of $\rho $ is choosen such that 
 $$
 \rho<c\delta,\>\>\> \>\>c=\frac{1}{\sqrt{6C^{'''}}},\>\>\>\> C^{'''}=C^{'''}(\Omega, L, m), 
 $$
then we obtain for a $m>d/2$
$$
(1-2\delta/3)\|f\|^{2}_{L_{2}(\Omega)}\leq 
\sum_{i}|U_{i}| |\Psi_{i}(f)|^{2}+C^{'''}\delta^{-1}\rho^{2m}\|L^{m/2}f\|^{2}_{L_{2}(\Omega)}.
$$
Lemma is proved.
\end{proof}

In the space $L_{2}(M)$ we consider the functionals 
\begin{equation}\label{phi-psi}
\Phi_{i}(f)=\sqrt{|U_{i}|}\Psi_{i}(f)=\frac{1}{\sqrt{|U_{i}|}}\int_{U_{i}}f(x)dx,\>\>\>\>|U_{i}|=\int_{U_{i}}dx.
\end{equation}

Since the functionals $
\Phi_{i}(f)$ are continuous on a subspace $E_{\omega}(L)$ they can be identified with certain functions in $E_{\omega}(L)$. The theorem below shows that the corresponding set of functions is  a frame in appropriate subspace of bandlimited functions.
\begin{thm}\label{Frame-th}
There exists a constant  $a_{0}=a_{0}(\Omega,  L)$ such that, if for a given $0<\delta<1$ and an $\omega>0$ one has $
\rho<a_{0}\delta^{1/d}\omega^{-1/2},$ 
and conditions (\ref{cover})  are satisfied, then 
\begin{equation}\label{frame-ineq}
(1-\delta)\|f\|^{2}_{L_{2}(\Omega)}\leq \sum_{i} |\Phi_{i}(f)|^{2}\leq\|f\|^{2}_{L_{2}(\Omega)},\>\>\>0<\delta<1,\>\>f\in E_{\omega}(L),
\end{equation}
where $\Phi_{i}$ are defined in  (\ref{phi-psi}).
\end{thm}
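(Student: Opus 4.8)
The plan is to treat the two inequalities in (\ref{frame-ineq}) separately. The Bessel estimate $\sum_{i}|\Phi_{i}(f)|^{2}\leq\|f\|^{2}_{L_{2}(\Omega)}$ requires no hypothesis on $\rho$ and no bandlimitedness: by the Cauchy--Schwarz inequality on each $U_{i}$,
$$
|\Phi_{i}(f)|^{2}=\frac{1}{|U_{i}|}\left|\int_{U_{i}}f\,dx\right|^{2}\leq\int_{U_{i}}|f|^{2}\,dx=\|f\|^{2}_{L_{2}(U_{i})},
$$
and since by (\ref{cover}) the family $\{U_{i}\}$ is a cover of $\Omega$ that is pairwise disjoint up to a set of measure zero, summing over $i$ gives the claim for every $f\in L_{2}(\Omega)$. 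All the content is in the lower bound, for which the engine is Lemma \ref{GPIT} combined with the Bernstein inequality on the space $E_{\omega}(L)$.

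For the lower bound I would apply Lemma \ref{GPIT} with the integer $m=d$; this is admissible since $d>d/2$, and it is precisely the choice that makes the exponent of $\delta$ come out to $1/d$. Denoting the resulting constants by $c=c(\Omega,L)$ and $C=C(\Omega,L)$ and using $\Phi_{i}(f)=\sqrt{|U_{i}|}\,\Psi_{i}(f)$, the lemma states that if $\rho<c\delta$ then
$$
(1-2\delta/3)\|f\|^{2}_{L_{2}(\Omega)}\leq\sum_{i}|\Phi_{i}(f)|^{2}+C\rho^{2d}\delta^{-1}\|L^{d/2}f\|^{2}_{L_{2}(\Omega)},\qquad f\in\mathcal{D}(L^{d/2}).
$$
Now let $f\in E_{\omega}(L)$, say $f=\sum_{\lambda_{k}\leq\omega}c_{k}u_{k}$ with $Lu_{k}=\lambda_{k}u_{k}$. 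The spectral theorem gives the Bernstein inequality $\|L^{d/2}f\|^{2}_{L_{2}(\Omega)}=\sum_{\lambda_{k}\leq\omega}\lambda_{k}^{d}|c_{k}|^{2}\leq\omega^{d}\|f\|^{2}_{L_{2}(\Omega)}$, so the remainder above is at most $C\delta^{-1}(\rho^{2}\omega)^{d}\|f\|^{2}_{L_{2}(\Omega)}$. Choosing $a_{0}=a_{0}(\Omega,L)>0$ so small that $a_{0}^{2d}\leq 1/(3C)$, the hypothesis $\rho<a_{0}\delta^{1/d}\omega^{-1/2}$ yields $(\rho^{2}\omega)^{d}<a_{0}^{2d}\delta^{2}\leq\delta^{2}/(3C)$, whence that remainder is $<\frac{\delta}{3}\|f\|^{2}_{L_{2}(\Omega)}$. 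Absorbing it into the left side and using $(1-2\delta/3)-\delta/3=1-\delta$ gives $(1-\delta)\|f\|^{2}_{L_{2}(\Omega)}\leq\sum_{i}|\Phi_{i}(f)|^{2}$, which together with the Bessel estimate is (\ref{frame-ineq}).

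The step I expect to be the main obstacle is the bookkeeping of the two smallness conditions on $\rho$. Lemma \ref{GPIT} is invoked under $\rho<c\delta$ --- this is the condition that, in its proof, disposes of the residual term of order $\rho^{2}\delta^{-1}\|f\|^{2}_{L_{2}(\Omega)}$ --- whereas the theorem only postulates $\rho<a_{0}\delta^{1/d}\omega^{-1/2}$, so one must shrink $a_{0}$ further so that the stated bound also enforces $\rho<c\delta$. Here it is used that (\ref{frame-ineq}) is vacuous unless $E_{\omega}(L)\neq\{0\}$, i.e. unless $\omega\geq\lambda_{1}$, so that the factor $\omega^{-1/2}$ is a priori bounded; the definitive value of $a_{0}=a_{0}(\Omega,L)$ --- the one that then enters the sequence $\rho_{j}$ of the Introduction --- is extracted from $c$, $C$ and $\lambda_{1}$. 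Once the constants are fixed, the rest (Cauchy--Schwarz, the Bernstein inequality, and the elementary absorption step) is routine. A variant that avoids quoting Lemma \ref{GPIT} as a black box is to re-run its proof for $f\in E_{\omega}(L)$ directly, replacing each $\|L^{j/2}f\|_{L_{2}(\Omega)}$ by $\omega^{j/2}\|f\|_{L_{2}(\Omega)}$ before invoking the regularity estimate (\ref{reg}) and the interpolation inequality (\ref{interpol}); this leads to the same final estimate.
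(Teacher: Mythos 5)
Your argument is the same as the paper's: the upper (Bessel) bound by Cauchy--Schwarz on each $U_{i}$ together with the essential disjointness of the cover (\ref{cover}), and the lower bound by applying Lemma \ref{GPIT} with $m=d$ and then the Bernstein inequality $\|L^{d/2}f\|^{2}_{L_{2}(\Omega)}\leq \omega^{d}\|f\|^{2}_{L_{2}(\Omega)}$ on $E_{\omega}(L)$, absorbing the remainder $C\rho^{2d}\delta^{-1}\|L^{d/2}f\|^{2}_{L_{2}(\Omega)}\leq Ca_{0}^{2d}\delta\|f\|^{2}_{L_{2}(\Omega)}$ by taking $a_{0}^{2d}\leq 1/(3C)$; this is exactly the route from (\ref{ineq-3}) and (\ref{bern}) to (\ref{frame-ineq}) in the text, with the constants made explicit.

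The only place where you go beyond the paper is the bookkeeping of the two smallness conditions, and your patch does not work as stated. To invoke Lemma \ref{GPIT} you need $\rho<c\delta$, while the theorem assumes only $\rho<a_{0}\delta^{1/d}\omega^{-1/2}$; deducing the former from the latter requires $a_{0}\delta^{1/d}\omega^{-1/2}\leq c\delta$, i.e. $a_{0}\leq c\,\delta^{1-1/d}\,\omega^{1/2}$. The bound $\omega\geq\lambda_{1}$ controls $\omega^{1/2}$ only from below by a fixed constant, and for $d\geq 2$ the factor $\delta^{1-1/d}$ tends to $0$ as $\delta\to 0$, so no choice of $a_{0}=a_{0}(\Omega,L)$ independent of $\delta$ forces $\rho<c\delta$ for all $0<\delta<1$ (your argument is fine for $d=1$, or whenever $\omega\geq (a_{0}/c)^{2}\delta^{-2(1-1/d)}$, which is the regime relevant to the construction of the $\rho_{j}$ with $\delta$ fixed and $\omega_{j}=2^{j+1}\to\infty$). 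The same remark applies to your proposed variant of re-running the proof of Lemma \ref{GPIT} inside $E_{\omega}(L)$: the lowest-order residual term is of size $\rho^{2}\delta^{-1}\omega\lesssim a_{0}^{2}\delta^{2/d-1}$, which again is $\lesssim\delta$ uniformly only for $d=1$. To be fair, the paper's own proof silently commits the identical step --- it cites (\ref{ineq-3}), established under $\rho<c\delta$, while assuming only $\rho<a_{0}\delta^{1/d}\omega^{-1/2}$ --- so this is a wrinkle you inherited and at least made visible, not a defect of your approach relative to the paper's; a clean fix is simply to add $\rho<c\delta$ to the hypotheses (harmless in all later applications) or to keep $\delta$ bounded away from zero.
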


\begin{proof}
By using  the Schwartz inequality 
we obtain the right-hand side of (\ref{frame-ineq})
$$
\sum_{i}|U_{i}| |\Psi_{i}(f)|^{2}=\sum_{i}\frac{|U_{i}|}{|U_{i}|^{2}}\left|\int_{U_{i}}fdx\right|^{2}\leq \sum_{i}\int_{U_{i}}|f|^{2}dx=\|f\|^{2}_{L_{2}(\Omega)},\>\>\>f\in L_{2}(\Omega).
$$
According to the previous lemma, there exist $c=c(\Omega,  L),\>C=C(\Omega, L)$ such that for any $0<\delta<1$ and any $\rho<c\delta$
\begin{equation}\label{ineq-3}
(1-2\delta/3)\|f\|^{2}_{L_{2}(\Omega)}\leq 
\sum_{i}|U_{i}| |\Psi_{i}(f)|^{2}+C\rho^{2d}\delta^{-1}\|L^{d/2}f\|^{2}_{L_{2}(\Omega)}.
\end{equation}
Notice, that if $f\in E_{\omega}(L)$, then the Bernstein inequality holds
\begin{equation}\label{bern}
\|L^{d/2}f\|^{2}_{L_{2}(\Omega)}\leq \omega^{d}\|f\|^{2}_{L_{2}(\Omega)}.
\end{equation}
Inequalities (\ref{ineq-3}) and (\ref{bern})  show that for a certain $a_{0}=a_{0}(\Omega, L)$, if  
$
\rho<a_{0}\delta^{1/d}\omega^{-1/2},
$
 then 
\begin{equation}
(1-\delta)\|f\|^{2}_{L_{2}(\Omega)}\leq \sum_{i}|U_{i}| |\Psi_{i}(f)|^{2},\>\>\>0<\delta<1,\>\>f\in E_{\omega}(L).
\end{equation}
Lemma is proved.
\end{proof}

  \section{Bandlimited localized frames  on domains}
  
  \subsection{Bandlimited frames}\label{cutoff}

 Let $h\in C_{0}^{\infty}(\mathbb{R}_{+})$ be a monotonic function such that $supp\>\>h\subset [0,\>  2], $ and $h(s)=1$ for $s\in [0,\>1], \>0\leq h(s)\leq 1, \>s>0.$ Setting  $Q(s)=h(s)-h(2s)$ implies that $0\leq Q(s)\leq 1, \>\>s\in supp\>Q\subset [2^{-1},\>2].$  Clearly, $supp\>Q(2^{-j}s)\subset [2^{j-1}, 2^{j+1}],\>j\geq 1.$ For the functions
 \begin{equation}\label{partition}
 F(s)=\sqrt{Q(s)},\>\>F_{0}(s)=\sqrt{ h(s)}, \>\>F_{j}(s)=\sqrt{Q(2^{-j}s)},\>\>j\geq 1, \>\>\>
 \end{equation}
 one has 
 \begin{equation}\label{id-0}
 \sum_{j\geq 0}F_{j}^{2}(s)=1.
 \end{equation}
 Operator $L$ has a discrete spectrum $0<\lambda_{1}\leq \lambda_{2}\leq..., $ and a  set of eigenfunctions $\{u_{j}\}$, with $ Lu_{j}=\lambda_{j}u_{j},$
 which forms an orthonormal basis in  $L_{2}(\Omega)$.  The positive square root $\sqrt{L}$ has spectrum $0<\sqrt{\lambda_{1}}\leq \sqrt{\lambda_{2}}\leq..., $ and the same set of eigenfunctions $\{u_{j}\},\>\>\>\sqrt{L}u_{j}=\sqrt{\lambda_{j}}u_{j}$. Clearly,
 $$
 E_{\sigma}\left(\sqrt{L}\right)=\left\{span\>\{u_{j}\}: \sqrt{\lambda_{j}}\leq \sigma\right\}=E_{\sigma^{2}}(L).
 $$
 The spectral theorem allows to consider operators $F_{j}(\sqrt{L})$ which are defined as follows
 $$
 F_{j}\left(\sqrt{L}\right)f(x)=F(2^{-j}\sqrt{L})f(x)=\int_{\Omega}K^{F}_{2^{-j}}(x,y)f(y)dy
 $$
   where  $K^{F}_{2^{-j}}(x,y)$ is a smooth function defined as
 \begin{equation}\label{kernel-1}
K^{F}_{2^{-j}}(x,y)= \sum _{m}F\left(2^{-j}\sqrt{\lambda_{m}}\right)u_{m}(x)\overline{u_{m}}(y).
\end{equation}
From (\ref{id-0}) we obtain 
$
\label{addto1op}
\sum_{j\geq 0}^{\infty} F_{j}^2\left(\sqrt{L}\right) = I,
$
where the sum (of operators) converges strongly on $L_2(\Omega)$. 
By applying both sides of this formula to an $f\in L_{2}(\Omega)$ we have
$$
\sum_{j\geq 0}^{\infty} F_{j}^2(\sqrt{L})f = f
$$
and taking inner product with $f$ gives 
\begin{equation}
\label{norm equality}
\|f\|^2_{L_{2}(\Omega)}=\sum_{j\geq 0}^{\infty} \left<F_{j}^2\left(\sqrt{L}\right)f,f\right>=\sum_{j\geq 0} ^{\infty}\|F_{j}\left(\sqrt{L}\right)f\|^2_{L_{2}(\Omega)} .
\end{equation}
Note, that since function $ F_{j}$ has support in  $
[2^{j-1},\>\>2^{j+1}]$ the function $F_{j}\left(\sqrt{L}\right)f $ is bandlimited to  $
[2^{j-1},\>\>2^{j+1}]$. 
We consider the sequence 
$$
\omega_{j}=2^{j+1},\>\>\>j=0,1,...,
$$ 
and fix a $0<\delta<1$.  For  the constant $a_{0}=a_{0}(\Omega, L)>0$ from  Theorem \ref{Frame-th}   construct  the sequence 
\begin{equation}\label{spacing}
\rho_{j}=a_{0}\delta^{1/d}\omega_{j}^{-1/2}=a_{0}\delta^{1/d}2^{-\frac{j+1}{2}},\>\>\>j=0,1,....
\end{equation}
For any fixed  $j=0,1,...$,  let  $\{U_{j,i}\}_{i=1}^{I_{j}}$ be a cover that  constructed in  (\ref{cover-0}). If  $\{\Psi_{j,i}\}_{i=1}^{I_{j}}$ is the corresponding set of functionals constructed according to   (\ref{functionals-2}), then  the frame inequalities (\ref{frame-ineq}) hold in every space $E_{\omega_{j}}(L)$.
We set 
\begin{equation}\label{Phi-Psi}
\Phi_{j,i}(f)=\sqrt{|U_{j,i}|}\Psi_{j,i}(f).
\end{equation}

\begin{rem}\label{remark} 
In what follows we identify functional $\Psi_{j,i}$ with the function $|U_{j,i}|^{-1}\chi_{j,i}$ where $\chi_{j,i}$ is characteristic function of a set $U_{j,i}$ which is contained in a cube $Q_{j,i}(\rho_{j})$. Then every functional $\Phi_{j,i}$ can be identified with $|U_{j,i}|^{-1/2}\chi_{j,i}$ and in this sense
\begin{equation}\label{Phi}
\int_{\Omega}\Phi_{j,i}dx=\sqrt{|U_{j,i}|}\leq \rho_{j}^{d}=a_{0}\delta 2^{-\frac{d}{2}(j+1)},
\end{equation}
where the constant $a_{0}=a_{0}(\Omega, L)>0$ is from  Theorem \ref{Frame-th}.
\end{rem} 
The double inequality  (\ref{frame-ineq}) imply   the following equivalence   for every $j=0, 1, ..., $ 
\begin{equation}\label{???}
(1-\delta)\left \|\> F_{j}\left(\sqrt{L}\right)f\right\|^{2}_{L_{2}(\Omega)}\leq 
\sum _{ i=1}^{I_{j}}\left|\left< F_{j}\left({\sqrt{ L}}\right)f, \Phi_{j,i}\right>\right|^{2}\leq
 \left\| \>F_{j}\left(\sqrt{ L}\right)f\right\|^{2}_{L_{2}(\Omega)},
\end{equation}
where  $F_{j}\left({\sqrt{L}}\right)f\in E_{\omega_{j}}\left(\sqrt{L}\right)=E_{\omega_{j}^{2}}(L)= E_{2^{2j+2}}(L)$. Summing over $j$ and applying  (\ref{norm equality})  gives for any $f\in L_{2}(\Omega)$ the following inequalities
\begin{equation}\label{component:equviv}
(1-\delta)\|f\|^{2}_{L_{2}(\Omega)}\leq\sum_{j\geq 0}^{\infty}\>\>\sum_{i=1}^{I_{j}} \left|\left<F_{j}\left({\sqrt{L}}\right)f, \Phi_{j,i}\right>\right|^{2}\leq \|f\|^{2}_{L_{2}(\Omega)},\>\>\>\>f\in L_{2}(\Omega).
\end{equation}
Since operator $F_{j}\left({\sqrt{ L}}\right)$ is self-adjoint 
we obtain  that for
\begin{equation}\label{FRAME}
\varphi_{j,i}=F_{j}\left({\sqrt{L}}\right)\Phi_{j,i}\in E_{\omega_{j}}\left(\sqrt{L}\right)=E_{2^{2j+2}}(L),
\end{equation}
the following double inequality holds for every $f\in L_{2}(\Omega)$ 
\begin{equation}\label{frame-ineq-10}
(1-\delta)\|f\|^{2}_{L_{2}(\Omega)}\leq\sum_{j\geq 0}\>\>\sum _{i=1}^{I_{j}}\left|\left<f, \varphi_{j,i}\right>\right|^{2}\leq \|f\|^{2}_{L_{2}(\Omega)},\>\>\>\>f\in L_{2}(\Omega), 
\end{equation}
which shows that $\left\{\varphi_{j,i}\right\}$ is a frame in $L_{2}(\Omega)$ .
Let us summarize results of this subsection.
\begin{thm}\label{BD-FRAME}
Consider the sequence 
$
\omega_{j}=2^{j+1},\>\>\>j=0,1,...,
$ 
and pick  a constant $0<\delta<1$.  For  the constant $a_{0}=a_{0}(\Omega, L)>0$ from  Theorem \ref{Frame-th}   construct  the sequence 
$
\rho_{j}=a_{0}\delta^{1/d}\omega_{j}^{-1/2},\>\>\>j=0,1,....
$
and let  the  collection $\{U_{j,i}\}_{i=1}^{I_{j}},\>\>j=0,1,..., \>\>U_{j,i}\subseteq Q_{i}(\rho_{j}),$ be a disjoint cover of $\Omega$ constructed in (\ref{cover-0}).

If every $\varphi_{j,i},\>\>j=0,1,..., \>\>1\leq i\leq I_{j}$ is given by (\ref{FRAME}) then $\varphi_{j,i}\in E_{\omega_{j}}\left(\sqrt{L}\right)=E_{2^{2j+2}}(L),\>\>j=0,1,..., \>\>1\leq i\leq I_{j}$,  is a bandlimited frame in $L_{2}(\Omega)$ with constants $1-\delta$ and $1$.
\end{thm}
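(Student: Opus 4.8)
The plan is to assemble Theorem \ref{BD-FRAME} from the three ingredients made available in this subsection: the spectral (Littlewood--Paley) identity (\ref{norm equality}) arising from the partition of unity (\ref{id-0})--(\ref{partition}); the band-by-band sampling inequality of Theorem \ref{Frame-th}; and the self-adjointness of the spectral multiplier operators $F_{j}\bigl(\sqrt{L}\bigr)$. Throughout we identify the functional $\Phi_{j,i}$ with the function $|U_{j,i}|^{-1/2}\chi_{j,i}$ as in Remark \ref{remark}.

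First I would verify the band-limitedness. Since the multiplier $F_{j}$ is supported in $[2^{j-1},2^{j+1}]$ for $j\ge 1$ and $F_{0}$ in $[0,2]$, the operator $F_{j}\bigl(\sqrt{L}\bigr)$ annihilates every eigenfunction $u_{m}$ with $\sqrt{\lambda_{m}}>2^{j+1}$, so its range is contained in the span of those $u_{m}$ with $\lambda_{m}\le 2^{2j+2}$; hence $\varphi_{j,i}=F_{j}\bigl(\sqrt{L}\bigr)\Phi_{j,i}\in E_{2^{2j+2}}(L)=E_{\omega_{j}}\bigl(\sqrt{L}\bigr)$ for every admissible pair $(j,i)$, which is the first assertion.

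Next I would install the frame bounds. Fix $j\ge 0$. The spacing $\rho_{j}=a_{0}\delta^{1/d}\omega_{j}^{-1/2}$ of (\ref{spacing}) is chosen (through the constant $a_{0}$ of Theorem \ref{Frame-th}) so that the sampling inequality holds on the relevant band of bandlimited functions, and $\{U_{j,i}\}_{i=1}^{I_{j}}$ is a cover of the type (\ref{cover}); hence Theorem \ref{Frame-th} applied to $F_{j}\bigl(\sqrt{L}\bigr)f$ gives the two-sided estimate (\ref{???}) with constants $1-\delta$ and $1$ that do not depend on $j$. Summing (\ref{???}) over $j\ge 0$ and invoking $\|f\|_{L_{2}(\Omega)}^{2}=\sum_{j\ge 0}\bigl\|F_{j}(\sqrt{L})f\bigr\|_{L_{2}(\Omega)}^{2}$ from (\ref{norm equality}) produces (\ref{component:equviv}), which is the desired double inequality written for the coefficients $\bigl\langle F_{j}(\sqrt{L})f,\Phi_{j,i}\bigr\rangle$. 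Finally, self-adjointness of $F_{j}\bigl(\sqrt{L}\bigr)$ gives $\bigl\langle F_{j}(\sqrt{L})f,\Phi_{j,i}\bigr\rangle=\bigl\langle f,F_{j}(\sqrt{L})\Phi_{j,i}\bigr\rangle=\langle f,\varphi_{j,i}\rangle$, turning (\ref{component:equviv}) into (\ref{frame-ineq-10}); this is exactly the statement that $\{\varphi_{j,i}\}$ is a frame in $L_{2}(\Omega)$ with lower bound $1-\delta$ and upper bound $1$.

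The one step I expect to require genuine care is the uniform applicability of Theorem \ref{Frame-th} across all dyadic bands: one must make sure that the single constant $a_{0}=a_{0}(\Omega,L)$, together with the extension-operator, elliptic-regularity and interpolation constants behind Lemma \ref{GPIT}, is good enough that the spacing rule $\rho_{j}\sim\omega_{j}^{-1/2}$ keeps the lower frame bound at $1-\delta$ simultaneously for all $j$ --- in particular, that the calibration of $\rho_{j}$ to the $j$-th spectral band correctly reflects the bandlimit of $F_{j}(\sqrt{L})f$. Everything else is bookkeeping: passing from the per-band inequalities to the global one is just termwise summation of a convergent series of nonnegative terms, and the self-adjointness step is immediate from the spectral calculus.
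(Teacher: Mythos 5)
Your proposal is correct and follows essentially the same route as the paper: bandlimitedness from the support of $F_{j}$, the per-band application of Theorem \ref{Frame-th} yielding (\ref{???}), summation combined with (\ref{norm equality}) to get (\ref{component:equviv}), and self-adjointness of $F_{j}\bigl(\sqrt{L}\bigr)$ to pass to (\ref{frame-ineq-10}). No gaps to report.
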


\subsection{ Localization of frame functions} 
The last statement is not very useful unless frame functions $\left\{\varphi_{j,i}\right\},\>\>j=0,1,..., \>\>1\leq i\leq I_{j}$,  exhibit certain localization. It is the goal of this subsection to demonstrate that this functions have very strong localization and for  large values of $j=0,1,...,$ they are  essentially concentrated in some neighborhoods of corresponding sets $U_{j,i}$.

Assume that   $g\in C_{0}^{\infty}(\mathbb{R}_{+})$ is a monotonic function such that $supp\>g\subset [0,\>  2], $ and $g(s)=1$ for $s\in [0,\>1], \>0\leq g(s)\leq 1, \>s>0.$  For $t>0$ the function $g_{t}(s)=g(ts)$ has support in $[0, \>2t^{-1}]$ and $g_{t}(s)=1$ for $s\in [0,\>t^{-1}]$. 

We consider  a self-adjoint bounded operator $g\left(t\sqrt{L}\right)$  in $L_{2}(\Omega)$ defined as
\begin{equation}\label{operator}
\left[g\left(t\sqrt{L}\right)f\right](x)=\int_{\Omega}K^{g}_{t}(x,y)f(y)dy,\>\>f\in L_{2}(\Omega),
\end{equation}
 where 
 \begin{equation}\label{kernel-2}
 K^{g}_{t}(x,y)=\sum _{\lambda_{m}}g(t\sqrt{\lambda_{m}})u_{m}(x)\overline{u_{m}}(y).
 \end{equation}
According to the spectral theorem norms of all operators $g\left(t\sqrt{L}\right),\>\>t\geq 0,$ are uniformly bounded.

Note, that  if $\theta\in C_{0}^{\infty}(\Omega)$ has Fourier series $\sum_{j}c_{j}(\theta)u_{j}$ then
$$
\left<\sum_{\lambda_{m}}u_{m}(x)\overline{u_{m}}(y), \theta(y)\right>=\int_{\Omega}\sum_{\lambda_{m}}u_{m}(x)\overline{u_{m}}(y)\theta(y)dy=\sum_{j}c_{j}(\theta)u_{j}(x).
$$
It shows that for every fixed $x\in \Omega$
$$
\delta_{x}(y)=\sum_{\lambda_{m}}u_{m}(x)\overline{u_{m}}(y),
$$
where $\delta_{x}$ is the Dirac measure concentrated at $x\in \Omega$ and convergence of the series is understood in the sense of distributions.
\begin{lem} \label{Lemma}
If  $g$ is the same as above then  for $\theta\in C_{0}^{\infty}(\Omega)$ the function 
$
g\left(t\sqrt{L}\right)\theta
$
goes to $\theta $ in the topology of  $C_{0}^{\infty}(\Omega)$ when $t$ goes to zero. It means that 
$
g\left(t\sqrt{L}\right)\theta
$
goes to $\theta$  uniformly with all derivatives on compact subsets of $\Omega$.
\end{lem}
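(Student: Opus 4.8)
The plan is to expand $\theta$ in the orthonormal eigenbasis $\{u_j\}$ of $L$, exploit that $g\equiv 1$ on $[0,1]$ so that $\theta-g\!\left(t\sqrt L\right)\theta$ involves only eigenvalues $\lambda_j>t^{-2}$, and then control that high-frequency tail using the rapid decay of the Fourier coefficients of the smooth function $\theta$, converting the resulting spectral estimate into uniform bounds on derivatives via elliptic regularity and Sobolev embedding.

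First I would record two structural facts. Since $\theta\in C_0^\infty(\Omega)$ and $L$ is a differential operator with smooth coefficients, $L^N\theta\in C_0^\infty(\Omega)$ for every $N$, and $C_0^\infty(\Omega)\subset\mathcal D(L^N)$; consequently $\langle L^N\theta,u_j\rangle=\lambda_j^N c_j(\theta)$ with $c_j(\theta)=\langle\theta,u_j\rangle$, so $\sum_j\lambda_j^{2N}|c_j(\theta)|^2=\left\|L^N\theta\right\|^2_{L_2(\Omega)}<\infty$ for all $N$. Secondly, since $\operatorname{supp}g\subset[0,2]$ and the spectrum of $L$ is discrete and unbounded, $g\!\left(t\sqrt L\right)\theta=\sum_{\lambda_j\le 4t^{-2}}g\!\left(t\sqrt{\lambda_j}\right)c_j(\theta)u_j$ is a finite linear combination of eigenfunctions, hence also belongs to $\mathcal D(L^N)$ for every $N$. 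Using $g(s)=1$ for $s\le 1$,
\[
\theta-g\!\left(t\sqrt L\right)\theta=\sum_{\lambda_j>t^{-2}}\left(1-g\!\left(t\sqrt{\lambda_j}\right)\right)c_j(\theta)u_j,\qquad \left|1-g\!\left(t\sqrt{\lambda_j}\right)\right|\le 1 .
\]

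Next, fix an integer $N$ and apply the regularity inequality (\ref{reg}) to $f=\theta-g\!\left(t\sqrt L\right)\theta$ with $j=2N$:
\[
\left\|\theta-g\!\left(t\sqrt L\right)\theta\right\|^2_{H^{2N}(\Omega)}\le b\left(\left\|\theta-g\!\left(t\sqrt L\right)\theta\right\|^2_{L_2(\Omega)}+\left\|L^N\!\left(\theta-g\!\left(t\sqrt L\right)\theta\right)\right\|^2_{L_2(\Omega)}\right).
\]
By the displayed expansion and Parseval the first term is $\le\sum_{\lambda_j>t^{-2}}|c_j(\theta)|^2$ and the second is $\le\sum_{\lambda_j>t^{-2}}\lambda_j^{2N}|c_j(\theta)|^2$; both are tails of the convergent series $\sum_j|c_j(\theta)|^2$ and $\sum_j\lambda_j^{2N}|c_j(\theta)|^2$, hence tend to $0$ as $t\to 0$ because then $t^{-2}\to\infty$. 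Thus $\left\|\theta-g\!\left(t\sqrt L\right)\theta\right\|_{H^{2N}(\Omega)}\to 0$ for every $N$. Finally, the Sobolev embedding $H^{2N}(\Omega)\hookrightarrow C^k(\overline\Omega)$, valid for $2N>k+d/2$ on a bounded domain with smooth boundary, yields for each multi-index $\alpha$ with $|\alpha|\le k$ and each compact $K\subset\Omega$ the bound $\sup_{x\in K}\left|\partial^\alpha\!\left(\theta-g\!\left(t\sqrt L\right)\theta\right)(x)\right|\le C\left\|\theta-g\!\left(t\sqrt L\right)\theta\right\|_{H^{2N}(\Omega)}\to 0$; since $N$, and hence $k$, is arbitrary, this is precisely convergence in the stated sense.

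The computation is essentially routine; the only points that need attention are the two structural facts at the start — in particular that $\theta$, and therefore $\theta-g\!\left(t\sqrt L\right)\theta$, lies in $\mathcal D(L^N)$ for every $N$, which is what forces the Fourier coefficients to decay faster than any power of $\lambda_j$ — and the passage from the $L_2$-based Sobolev estimate to uniform bounds on derivatives, which is handled by combining the elliptic regularity inequality (\ref{reg}) with the classical Sobolev embedding theorem.
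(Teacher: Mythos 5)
Your proof is correct, but it takes a genuinely different route from the paper's. The paper also reduces the problem to the high-frequency tail $\sum_{\lambda_m>t^{-1}}c_m(\theta)\partial_x^\alpha u_m(x)$, but then estimates it \emph{termwise}: it invokes the Weyl asymptotics $\lambda_m\sim m^{2/d}$ and the Sobolev embedding applied to each eigenfunction to get polynomial bounds $\|u_m\|_{C^r(\Omega)}\leq C_r(m+1)^{\gamma_r}$, and concludes by Cauchy--Schwarz with weights $(m+1)^{l}$, using (as you do) that $C_0^\infty(\Omega)\subset\mathcal D(L^N)$ for all $N$ forces rapid decay of the coefficients $c_m(\theta)$. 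You instead bound the entire tail at once in $H^{2N}(\Omega)$ via the graph-norm embedding $\mathcal D(L^N)\subset H^{2N}(\Omega)$ (the regularity inequality (\ref{reg})) and then apply the Sobolev embedding $H^{2N}(\Omega)\hookrightarrow C^k(\overline\Omega)$ a single time. Your argument avoids the eigenvalue asymptotics and the explicit sup-norm growth bounds on individual eigenfunctions, and it actually yields the slightly stronger conclusion of convergence with all derivatives uniformly on $\overline\Omega$, not just on compact subsets of $\Omega$; the paper's termwise version, on the other hand, makes explicit the eigenfunction-level estimates that it reuses in the subsequent localization analysis. Both proofs rest on the same two structural facts you isolate ($g\equiv 1$ near $0$ with compact support, and $C_0^\infty(\Omega)$ contained in every $\mathcal D(L^N)$), and your handling of them is sound.
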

 \begin{proof} For every   derivative $\partial_{x}^{\alpha}= \frac{\partial^{\alpha}}{\partial x_{1}^{\alpha_{1}}...\partial x_{d}^{\alpha_{d}}},\>\>\>\alpha=(\alpha_{1},...,\alpha_{d}),$ one has
$$
\left|\partial_{x}^{\alpha}g\left(t\sqrt{L}\right)\theta(x)-\partial_{x}^{\alpha}\theta(x)\right|=\left|\sum_{\lambda_{m}}\left[g\left(t\sqrt{\lambda_{m}}\right)-1\right]c_{m}(\theta)\partial_{x}^{\alpha}u_{m}(x)\right|=
$$
$$
\left|\sum_{m,\>\>\lambda_{m}>t^{-1}}c_{m}(\theta)\partial_{x}^{\alpha}u_{m}(x)\right|.
$$
 For eigenvalues the following relation holds $\lambda_{m}\sim m^{2/d}$ \cite{Hor}, \cite{T}. Then an application of the Sobolev embedding theorem gives that for every natural $r$ there exist constants $C_{r}, \>\gamma_{r}$ such that for all natural $m$ one has the inequalities $\|u_{m}\|_{C^{r}(\Omega)}\leq C_{r}(m+1)^{\gamma_{r}}$. Thus, for sufficiently large $l>\alpha+1$ we have
$$
\left|\partial_{x}^{\alpha}g\left(t\sqrt{L}\right)\theta(x)-\partial_{x}^{\alpha}\theta(x)\right|=
\left|\sum_{m,\>\>\lambda_{m}>t^{-1}}(m+1)^{l}c_{m}(\theta)\frac{\partial_{x}^{\alpha}u_{m}(x)}{(m+1)^{l}}\right|\leq
$$
$$
C_{\alpha}\left(\sum_{m,\>\>\lambda_{m}>t^{-1}}(m+1)^{2l}c^{2}_{m}(\theta)\right)^{1/2} \left( \sum_{m,\>\>\lambda_{m}>t^{-1}}(m+1)^{2(\gamma_{\alpha}-l)}\right)^{1/2}, \>\>\theta\in C_{0}^{\infty}(\Omega).
$$
Since $C_{0}^{\infty}(\Omega)$ is a subset of domain of any power of $L$ both  sums on the right goo  to zero when $t\rightarrow 0.$  Lemma is proved. 
\end{proof}

 This Lemma is actually a  main ingredient  of the proofs of a general Propositions 3.5,  3.6 in \cite{T},  Ch. XII. 
Below we reformulate a part of these Propositions  in a form which is more suitable for our purposes.

 In what follows the following notation will be used: 
 $
 \Delta=\{(x,x): x\in \Omega\}.
 $   \begin{prop}\label{prop}
   If $g$ is the same as above  then for any non-negative integer vectors $\alpha=(\alpha_{1},...,\alpha_{d})$ and $\beta=(\beta_{1},...,\beta_{d})$ the derivative 
   $$
   \frac{\partial^{\alpha}}{\partial x_{1}^{\alpha_{1}}...\partial x_{d}^{\alpha_{d}}}    \frac{\partial^{\beta}}{\partial y_{1}^{\beta_{1}}...\partial y_{d}^{\beta_{d}}}K_{t}^{g}(x,y)=\partial_{x}^{\alpha}\partial_{y}^{\beta}K_{t}^{g}(x,y)
   $$ goes to zero uniformly on compact subsets of $(\Omega\times \Omega)\setminus \Delta$ when $t$ goes to zero. 
   
   At the same time
\begin{equation}
K_{t}^{g}(x,x)\sim c\>t^{-d},\>\>\>t\rightarrow 0.
\end{equation}

    \end{prop}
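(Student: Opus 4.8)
\textbf{Proof proposal for Proposition \ref{prop}.}

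The plan is to derive both assertions from the kernel representation $K_t^g(x,y)=\sum_m g(t\sqrt{\lambda_m})u_m(x)\overline{u_m}(y)$ together with Lemma \ref{Lemma} and Weyl-type spectral asymptotics, exactly along the lines of the general spectral-projector propositions in \cite{T}, Ch. XII. First I would treat the off-diagonal decay. Fix a compact set $F\subset(\Omega\times\Omega)\setminus\Delta$, so that there is $c_0>0$ with $|x-y|\geq c_0$ for all $(x,y)\in F$. Since $L$ is an elliptic second-order operator with smooth coefficients, for each multi-index pair $(\alpha,\beta)$ and each power $k$ the function $\partial_x^\alpha\partial_y^\beta K_t^g$ can be written, after integrating by parts against $L^k$ in both variables and using that $g(t\sqrt L)$ commutes with $L$, as $\langle L_x^{-k}L_y^{-k}\,\partial_x^\alpha\partial_y^\beta\,(\text{something smooth})\rangle$; the point is that on $F$ the Schwartz kernel of any fixed negative power of $L$ (or of $L^{-k}$ applied in both arguments) is a smooth function, and applying $g(t\sqrt L)$ to such a smooth object and letting $t\to0$ recovers that smooth object uniformly on compacts by Lemma \ref{Lemma}. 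Concretely: for $\theta\in C_0^\infty(\Omega)$ supported away from a neighborhood of a fixed $x_0$, $\partial_x^\alpha\bigl(g(t\sqrt L)\theta\bigr)(x_0)\to\partial_x^\alpha\theta(x_0)=0$, and one promotes this pointwise/test-function statement to uniform convergence of the bivariate kernel derivative on $F$ by the usual compactness argument (finite subcover of $F$ by product neighborhoods, plus the uniform bound on $\|g(t\sqrt L)\|$). The cleanest route is to quote that this is precisely (a special case of) Propositions 3.5–3.6 in \cite{T}, Ch. XII, since the paper has already announced that Lemma \ref{Lemma} is the main ingredient of those propositions.

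For the on-diagonal asymptotics, I would argue $K_t^g(x,x)=\sum_m g(t\sqrt{\lambda_m})|u_m(x)|^2$. Interpreting this as $\int_0^\infty g(t\sqrt\lambda)\,dE_\lambda(x,x)$ where $E_\lambda(x,x)$ is the diagonal of the spectral function of $L$, one invokes the pointwise Weyl law $E_\lambda(x,x)\sim C_d\,\lambda^{d/2}$ (valid locally uniformly in $x\in\Omega$ for elliptic second-order operators; see \cite{Hor}). Substituting $\mu=t\sqrt\lambda$, i.e. $\lambda=\mu^2/t^2$, gives $K_t^g(x,x)=\int_0^\infty g(\mu)\,d_\mu E_{\mu^2/t^2}(x,x)\sim C_d t^{-d}\int_0^\infty g(\mu)\,d(\mu^d)=c\,t^{-d}$ as $t\to0$, with $c=c(d,x)$ expressible through $g$; this is the claimed relation $K_t^g(x,x)\sim c\,t^{-d}$. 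A rigorous version requires only that the remainder in the pointwise Weyl law is $o(\lambda^{d/2})$, which holds, and that $g$ is bounded with compact support so that dominated convergence applies after the scaling.

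The main obstacle I anticipate is making the off-diagonal convergence genuinely uniform on compact subsets of $(\Omega\times\Omega)\setminus\Delta$ with \emph{all} derivatives $\partial_x^\alpha\partial_y^\beta$, rather than merely pointwise: one must control enough negative powers of $L$ acting in both variables so that $L_x^{-k}L_y^{-k}$ applied to the relevant smooth density lands in a space of functions to which Lemma \ref{Lemma} applies uniformly, and one must verify that the smoothing operator $g(t\sqrt L)$ does not spoil the support separation — it does not destroy it in the limit, but for fixed $t$ the kernel has global support, so the argument is really a limiting one. Since the paper explicitly defers to \cite{T}, Ch. XII, Prop. 3.5–3.6, I would present the proof as: (i) recall the integration-by-parts reduction to the regime where Lemma \ref{Lemma} applies, (ii) cite \cite{T} for the resulting uniform off-diagonal statement, and (iii) carry out the short Weyl-law computation above for the diagonal asymptotics, which is the one piece not contained verbatim in \cite{T}.
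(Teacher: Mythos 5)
Your overall route matches the paper's: both rest on Lemma \ref{Lemma} and ultimately defer the general statement to Propositions 3.5--3.6 of \cite{T}, Ch.~XII (the paper itself only sketches the case $\alpha=\beta=0$ and cites Taylor for the rest), and your pointwise Weyl-law computation for $K^g_t(x,x)\sim c\,t^{-d}$ is sound and in fact more explicit than the paper, which asserts the diagonal asymptotics without proof. So the disagreement is confined to the part of the off-diagonal argument you try to do by hand, and there the two steps you propose have real problems. First, the reduction ``by integrating by parts against $L^k$'': if you factor out $L_x^{-k}L_y^{-k}$, the remaining spectral multiplier is $\lambda_m^{2k}g(t\sqrt{\lambda_m})$, which on the support of $g(t\,\cdot)$ (i.e.\ $\lambda_m\lesssim t^{-2}$) is of size $t^{-4k}$; the ``something smooth'' is therefore not bounded uniformly in $t$, and smoothness of the kernel of $L^{-k}$ off the diagonal gives no smallness by itself, so this reduction does not produce the decay you want. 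Second, and more importantly, the promotion step: knowing that $\partial_x^\alpha\bigl(g(t\sqrt L)\theta\bigr)(x_0)\to 0$ for test functions $\theta$ supported away from $x_0$ is only weak (pairing-against-$\theta$) information about the kernel, and a uniform bound on the $L_2$ operator norm of $g(t\sqrt L)$ cannot upgrade it to pointwise, let alone uniform-with-derivatives, convergence of $K^g_t(x,y)$, because kernel evaluation at a point is not continuous with respect to the $L_2$ operator norm.

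The paper closes exactly this gap by a different mechanism, which you should adopt: write $K^g_t(\cdot,y)=g(t\sqrt L)\delta_y$, pick $\psi_1,\psi_2\in C_0^\infty(\Omega)$ with disjoint supports, and consider $R^g_t=\psi_1\,g(t\sqrt L)\,\psi_2$, whose kernel is $\psi_1(x)K^g_t(x,y)\psi_2(y)$. By duality $R^g_t$ acts on distributions, and the crucial ingredient is that the family $\{R^g_t v\}_{0<t<1}$ is bounded in $C^\infty(\Omega)$ (uniform smoothing estimates for the localized operator). This equicontinuity, combined with the convergence on test functions furnished by Lemma \ref{Lemma}, yields $R^g_t v\to 0$ in $C^\infty(\Omega)$ uniformly over compact sets of distributions $v$, in particular over $\{\delta_y\}$; since $\psi_1,\psi_2$ are arbitrary cutoffs with disjoint supports, this is precisely the uniform convergence of $\partial_x^\alpha\partial_y^\beta K^g_t$ to zero on compact subsets of $(\Omega\times\Omega)\setminus\Delta$. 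In short: replace your $L^{-k}$ reduction and $L_2$-norm compactness argument by the localization $\psi_1 g(t\sqrt L)\psi_2$ together with uniform $C^\infty$-bounds applied to the delta distributions; keep your Weyl-law computation for the diagonal, and cite \cite{T} for the full derivative statement as the paper does.
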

   \begin{proof}
   We sketch the proof only in the case $\alpha=|\beta|=0$ and for the general case refer to the above mentioned reference.  
   
   One clearly has  that $K_{t}^{g}(x,y)=g\left (t\sqrt{L}\right)\delta_{y}$. Pick $\psi_{1}, \psi_{2} \in C_{0}^{\infty}(\Omega)$ which have disjoint supports. The function 
   $$
   \psi_{1}(x)K^{g}_{t}(x,y)\psi_{2}(y)=R_{t}^{g}(x,y)
   $$
   is the kernel of the operator $\psi_{1}(x)g\left(t\sqrt{L}\right)(\psi_{2} f)=R^{g}_{t} f$.  Next, we note that by using duality one can define operators $R^{g}_{t}$ on the set of distributions on $\Omega$. If $v$ is a distribution on $\Omega$ then the set $\left\{R_{t}^{g}v\right\} $ with $0<t<1$ is bounded in $C^{\infty}(\Omega)$ and along with the previous Lemma it implies that $R_{t}^{g}v$ goes to zero in $C^{\infty}(\Omega)$ uniformly on compact sets of distributions $v$.  In particular it is true for  the set of distributions $\left\{\delta_{y}\right\}_{y\in \Omega}$.  In other words the kernel $R_{t}^{g}(x,y)$ goes to zero uniformly when $t$ goes to zero.
   It proves the proposition in the case $\alpha=|\beta|=0$.
    \end{proof}
   
   \begin{col}\label{col}
    Given  two non-negative integer vectors $\alpha=(\alpha_{1},...,\alpha_{d})$ and $\beta=(\beta_{1},...,\beta_{d})$  and   two sufficiently small positive numbers $\epsilon_{1}, \epsilon_{2}$  one can find a positive  $t_{0}(\alpha, \beta, \epsilon_{1}, \epsilon_{2})$ such that for any $x\in\Omega $ the following inequality holds 
   \begin{equation}\label{K-111}
   \left|\partial_{x}^{\alpha}\partial_{y}^{\beta}K^{g}_{t}(x,y)\right|<\epsilon_{2}
   \end{equation}
   for all  $y\in \Omega\setminus B(x, \epsilon_{1})$ as long as $0<t<t_{0}(\alpha, \beta, \epsilon_{1}, \epsilon_{2})$. Equivalently, for any $y\in\Omega $ the same inequality (\ref{K-111}) holds for all    $x\in \Omega\setminus B(y, \epsilon_{1})$ as long as $0<t<t_{0}(\alpha, \beta, \epsilon_{1}, \epsilon_{2})$. Here $B(x, \epsilon_{1}),\>\>B(y, \epsilon_{1})$ are balls with centers $x$ and $y$ and radius  $\epsilon_{1}$. 
   
   \end{col}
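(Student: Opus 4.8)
The plan is to derive Corollary \ref{col} directly from Proposition \ref{prop} by a straightforward compactness-and-uniformity argument, organized around the fact that away from the diagonal $\Delta$ the derivatives of the kernel go to zero uniformly on compact sets. First I would fix the two non-negative integer vectors $\alpha,\beta$ and the two small positive numbers $\epsilon_{1},\epsilon_{2}$. The natural closed set to work on is
$$
K_{\epsilon_{1}}=\left\{(x,y)\in\overline{\Omega}\times\overline{\Omega}\,:\,\|x-y\|\geq\epsilon_{1}\right\},
$$
which is a compact subset of $\overline{\Omega}\times\overline{\Omega}$; one should check that (after a harmless shrinking of $\epsilon_{1}$, or by passing to a slightly interior version so as to stay away from the boundary where eigenfunction estimates are used) it is contained in a compact subset of $(\Omega\times\Omega)\setminus\Delta$, which is exactly the set on which Proposition \ref{prop} gives uniform convergence $\partial_{x}^{\alpha}\partial_{y}^{\beta}K_{t}^{g}(x,y)\to 0$ as $t\to 0$.

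Next I would invoke Proposition \ref{prop}: uniform convergence to zero on $K_{\epsilon_{1}}$ means precisely that there exists $t_{0}=t_{0}(\alpha,\beta,\epsilon_{1},\epsilon_{2})>0$ such that $\left|\partial_{x}^{\alpha}\partial_{y}^{\beta}K_{t}^{g}(x,y)\right|<\epsilon_{2}$ for every $(x,y)\in K_{\epsilon_{1}}$ and every $0<t<t_{0}$. Then the two displayed equivalent formulations in the corollary are just two ways of slicing $K_{\epsilon_{1}}$: fixing $x\in\Omega$, the slice $\{y:(x,y)\in K_{\epsilon_{1}}\}$ is exactly $\Omega\setminus B(x,\epsilon_{1})$ (intersected with $\Omega$), and fixing $y$ gives $\Omega\setminus B(y,\epsilon_{1})$; the symmetry of the statement in $x$ and $y$ reflects nothing more than the symmetry of $K_{\epsilon_{1}}$ under $(x,y)\mapsto(y,x)$, so no separate argument is needed for the ``equivalently'' clause.

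The main obstacle — really the only subtle point — is the interface with the boundary $\Gamma$: Proposition \ref{prop} is stated for compact subsets of $(\Omega\times\Omega)\setminus\Delta$, i.e.\ compact subsets of the \emph{open} domain, whereas $K_{\epsilon_{1}}$ as written touches $\Gamma$. I would handle this either by reading ``for any $x\in\Omega$'' and ``$y\in\Omega\setminus B(x,\epsilon_{1})$'' in the corollary literally (so the relevant set is $\{(x,y)\in\Omega\times\Omega:\|x-y\|\geq\epsilon_{1}\}$ and one exhausts it by compact subsets, noting $t_{0}$ can be chosen uniformly because the estimate in Proposition \ref{prop} only degrades as one approaches $\Delta$, not as one approaches $\Gamma$, thanks to the Sobolev bounds $\|u_{m}\|_{C^{r}(\Omega)}\leq C_{r}(m+1)^{\gamma_{r}}$ used in Lemma \ref{Lemma} being up-to-the-boundary estimates), or, more cleanly, by first fixing an arbitrary compact $K\subset\Omega$ and proving the stated inequality for $x\in K$, which is all that is used in the sequel. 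Beyond this, the proof is a one-line deduction: uniform convergence on a compact set is, by definition, the existence of the threshold $t_{0}$ claimed. I would write it in two or three sentences, emphasizing that the content is entirely in Proposition \ref{prop} and the corollary merely repackages it as a quantitative ``eventually small outside an $\epsilon_{1}$-ball'' statement, which is the form needed to establish the off-diagonal decay of the frame functions $\varphi_{j,i}$.
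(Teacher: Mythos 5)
Your derivation matches the paper's treatment: the paper states Corollary \ref{col} with no separate proof, precisely because it is the immediate repackaging of the uniform convergence in Proposition \ref{prop} that you spell out via the compact set $\left\{(x,y):\|x-y\|\geq\epsilon_{1}\right\}$ and its symmetric slicing. One caution: your fallback remark that restricting $x$ to a compact $K\subset\Omega$ ``is all that is used in the sequel'' is not accurate --- in Theorem \ref{LOCALIZATION} the sets $U_{j,i}$ and the points $x$ may lie arbitrarily close to $\Gamma$ --- so the version actually needed is your first repair, uniformity up to the boundary via the up-to-the-boundary bounds $\|u_{m}\|_{C^{r}(\Omega)}\leq C_{r}(m+1)^{\gamma_{r}}$, which is also what the paper implicitly assumes when it applies the corollary.
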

   
      We now return to our situation with kernels $K^{F}_{2^{-j}}(x,y)$ defined in (\ref{kernel-1}).
      
   \begin{thm}\label{LOCALIZATION}
    For any non-negative integer vectors $\alpha=(\alpha_{1},...,\alpha_{d})$  and every sufficiently large $N\in \mathbb{N}$, there exists a $J(\alpha, N)$ such that for all  $(j,i)$ with $j>J(\alpha, N)$ and every  $x$ outside of the cube $Q_{i}(2^{-N-2})$ one has 
   \begin{equation}
      \left| \partial_{x}^{\alpha}\varphi_{j,i}(x)\right|\leq a_{0}\delta2^{-N-dj/2},
   \end{equation}
where the constant $a_{0}=a_{0}(\Omega, L)>0$ is the same as in   Theorem \ref{Frame-th}.

  Also, there  exists a constant $C$ such that for all pairs $(j,i)$ 
   \begin{equation}\label{norm}
         \|\varphi_{j,i}\|_{L_{2}(\Omega)}\leq C .                              
   \end{equation} 

\end{thm}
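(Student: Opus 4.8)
The plan is to read off both statements from the representation $\varphi_{j,i}=F_j(\sqrt{L})\Phi_{j,i}$ of (\ref{FRAME}) together with the off-diagonal decay of the kernels $K^{F}_{2^{-j}}$ of (\ref{kernel-1}). Identifying $\Phi_{j,i}$ with $|U_{j,i}|^{-1/2}\chi_{j,i}$ as in Remark \ref{remark}, and noting that for $j\ge 1$ the series (\ref{kernel-1}) is in fact a finite sum (since $F=\sqrt Q$ is supported in $[2^{-1},2]$ and $\lambda_m\to\infty$), hence a $C^\infty$ kernel, one may differentiate under the integral sign:
\begin{equation}
\partial_x^\alpha\varphi_{j,i}(x)=\frac{1}{\sqrt{|U_{j,i}|}}\int_{U_{j,i}}\partial_x^\alpha K^{F}_{2^{-j}}(x,y)\,dy,\qquad
\bigl|\partial_x^\alpha\varphi_{j,i}(x)\bigr|\le\sqrt{|U_{j,i}|}\,\sup_{y\in U_{j,i}}\bigl|\partial_x^\alpha K^{F}_{2^{-j}}(x,y)\bigr| .
\end{equation}
The uniform $L_2$ bound is then immediate and needs nothing about kernels: since $0\le F_j\le 1$, the operator $F_j(\sqrt{L})$ is a contraction on $L_2(\Omega)$ by the spectral theorem, while $\|\Phi_{j,i}\|^2_{L_2(\Omega)}=|U_{j,i}|^{-1}\int_{U_{j,i}}dx=1$; hence $\|\varphi_{j,i}\|_{L_2(\Omega)}\le 1$ for all $(j,i)$, and one may take $C=1$.

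For the localization estimate I would first observe that Lemma \ref{Lemma}, Proposition \ref{prop} and Corollary \ref{col} hold verbatim with the symbol $F=\sqrt Q$ in place of $g$: the only property of $g$ used there is membership in $C_0^\infty(\mathbb{R}_+)$, and the sole change — that $F(t\sqrt{L})\theta\to 0$, rather than $\to\theta$, as $t\to 0$ for $\theta\in C_0^\infty(\Omega)$, because $\theta$ lies in the domain of every power of $L$ so the tails of its eigenfunction expansion vanish — only shortens the argument. Thus, for each $\alpha$ and each pair $\epsilon_1,\epsilon_2>0$ there is $t_0=t_0(\alpha,\epsilon_1,\epsilon_2)>0$ such that
\begin{equation}\label{offdiag}
\bigl|\partial_x^\alpha K^{F}_{t}(x,y)\bigr|<\epsilon_2\qquad\text{whenever }0<t<t_0\ \text{and}\ \operatorname{dist}(x,y)\ge\epsilon_1 .
\end{equation}

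Now fix a large $N$ and do the bookkeeping; here $Q_i(2^{-N-2})$ is taken to be the cube concentric with $U_{j,i}$ of side $2^{-N-2}$. As $\rho_j=a_0\delta^{1/d}2^{-(j+1)/2}\to 0$, pick $j_0(N)$ with $\rho_j<2^{-N-3}$ for $j>j_0(N)$; then $Q_i(\rho_j)\subset Q_i(2^{-N-2})$, so for $x\notin Q_i(2^{-N-2})$ and $y\in U_{j,i}\subseteq Q_i(\rho_j)$ one has $\operatorname{dist}(x,y)\ge 2^{-N-4}=:\epsilon_1$, uniformly in $i$. Put $\epsilon_2:=2^{d/2-N}$; then, by the volume bound $\sqrt{|U_{j,i}|}\le a_0\delta\,2^{-\frac d2(j+1)}$ of Remark \ref{remark},
\begin{equation}
\sqrt{|U_{j,i}|}\,\epsilon_2\ \le\ a_0\delta\,2^{-\frac d2(j+1)}\,2^{\frac d2-N}\ =\ a_0\delta\,2^{-N-\frac{dj}{2}} .
\end{equation}
Finally set $J(\alpha,N):=\max\{j_0(N),\ \lceil\log_2(1/t_0(\alpha,\epsilon_1,\epsilon_2))\rceil\}$, so that $j>J(\alpha,N)$ forces $2^{-j}<t_0$. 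Combining the first display with (\ref{offdiag}) at $t=2^{-j}$ and with the last display yields $|\partial_x^\alpha\varphi_{j,i}(x)|\le\sqrt{|U_{j,i}|}\,\epsilon_2\le a_0\delta\,2^{-N-dj/2}$ for all $i$ and all $x\notin Q_i(2^{-N-2})$, which is the claim.

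The step I expect to be the main obstacle is (\ref{offdiag}): establishing the off-diagonal decay of $\partial_x^\alpha K^{F}_{2^{-j}}(x,y)$ and, equally, its uniformity in the cube index $i$. The decay is precisely the content of Proposition \ref{prop}/Corollary \ref{col}, so the real work is to verify that those statements are insensitive to the particular choice of cut-off and hence apply to the band-limiting symbol $F$; uniformity in $i$ then comes for free, since $t_0$ depends only on $\alpha,\epsilon_1,\epsilon_2$ and the separation $\operatorname{dist}(x,y)\ge\epsilon_1$ holds simultaneously for every $i$ once $\rho_j<2^{-N-3}$. A small but genuine point to settle is the meaning of the cube $Q_i(2^{-N-2})$: it must be read as the cube concentric with $U_{j,i}$, for otherwise the inclusion $Q_i(\rho_j)\subset Q_i(2^{-N-2})$ — and with it the distance bound that powers the estimate — would not be available.
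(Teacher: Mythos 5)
Your proof is correct and takes essentially the same route as the paper: the kernel representation $\partial_x^\alpha\varphi_{j,i}(x)=\int_{U_{j,i}}\partial_x^\alpha K^{F}_{2^{-j}}(x,y)\Phi_{j,i}(y)\,dy$ combined with the volume bound of Remark \ref{remark} and the off-diagonal decay of Corollary \ref{col} transferred to the symbol $F$ (the paper does this by writing $F=g_{1}-g_{2}$ with each $g_{k}$ equal to $1$ near zero rather than re-running the lemmas for $F$ directly, but the outcome is the same), plus the spectral-theorem contraction bound and $\|\Phi_{j,i}\|_{L_{2}(\Omega)}=1$ for the uniform $L_{2}$ estimate. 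Your bookkeeping with $\epsilon_{1}=2^{-N-4}$, $\epsilon_{2}=2^{d/2-N}$, $j_{0}(N)$ and the reading of $Q_{i}(2^{-N-2})$ as the concentric cube is in fact a slightly more careful version of the paper's choice $\epsilon_{1}=\epsilon_{2}=2^{-N-2}$, and is entirely consistent with it.
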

\begin{proof}First,  we note that for the proofs of Lemma \ref{Lemma} and Proposition \ref{prop} it was important that function $g\in C_{0}^{\infty}(\mathbb{R}_{+})$ takes value one in a neighborhood of zero.  At the same time  the support of the function $F$ is in $[2^{-1}, \>2]$ (see (\ref{partition})). However, it is clear that $F$ is a difference of two functions  $g_{1}, \>g_{2}\in C_{0}^{\infty}(\mathbb{R}_{+})$ which take value $1$ in some neighborhoods of zero. Since $K^{F}_{t}(x,y)=K^{g_{1}}_{t}(x,y)-K^{g_{2}}_{t}(x,y)$ the previous Corollary  \ref{col} and in particular inequality (\ref{K-111}) hold true for the kernel 
   \begin{equation}\label{}
K^{F}_{t}(x,y)= \sum _{m}F\left(t\sqrt{\lambda_{m}}\right)u_{m}(x)\overline{u_{m}}(y).
\end{equation}
 Pick a large  $N\in \mathbb{N}$ and set $\epsilon_{1}=\epsilon_{2}=2^{-N-2}$.  With this choice of $\epsilon_{1},\>\epsilon_{2}$ let $J(\alpha, N)$  be a smallest natural number  for which $2^{-J(\alpha,  N)}<t_{0}(\alpha,  \epsilon_{1}, \epsilon_{2})$, where $t_{0}(\alpha,   \epsilon_{1}, \epsilon_{2})$ is from the previous Corollary. 
 
 Below we are using a family of cubes  $Q_{ i}(\rho_{j})$ and a family of sets $U_{j,i}(\rho_{j})=Q_{i}(\rho_{j})\cap \Omega\subseteq Q_{i}(\rho_{j})$ which were defined in (\ref{cube-0})-(\ref{cover-0}).

 Since support of $\Phi_{j,i}=|U_{j,i}|^{-1/2}\chi_{j,i}$ is the set $U_{j,i}\subseteq Q_{i}(\rho_{j})$ 
 we obtain by using (\ref{Phi})  that for every $j>J(\alpha,  N)$ and every $x$ outside of the cube $Q_{i}(\rho_{j})$ with $\rho_{j}=a_{0}\delta^{1/d}2^{-\frac{j+1}{2}},\>\>\>j=0,1,....$ the next inequality holds
 \begin{equation}
\left| \partial_{x}^{\alpha}\varphi_{j,i}(x)\right|=\left|\partial_{x}^{\alpha}F_{j}(L)\Phi_{j,i}(x)\right| =\left|\int_{U_{j,i}}\partial_{x}^{\alpha}K^{F}_{2^{-j}}(x,y)\Phi_{j,i}(y)dy\right|\leq
 $$
 $$
 a_{0}\delta 2^{\frac{-d(j+1)}{2}}\sup_{y\in U_{j,i}}\left|\partial_{x}^{\alpha}K^{F}_{2^{-j}}(x,y)\right| \leq a_{0}\delta 2^{-N-2}2^{\frac{-d(j+1)}{2}}\leq a_{0}\delta 2^{-N-\frac{dj}{2}}.
  \end{equation}
The inequality (\ref{norm}) follows from (\ref{FRAME}), the fact that  $F_{j}\left(\sqrt{L}\right)$ is a bounded operator in $L_{2}(\Omega)$ and the formula $\Phi_{j,i}=|U_{j,i}|^{-1/2}\chi_{j,i}$. Theorem  \ref{LOCALIZATION} is proved. 
\end{proof}
    
Theorems \ref{BD-FRAME} and \ref{LOCALIZATION} imply Theorem \ref{FRAME-TH}.    
    
\section{Besov spaces}\label{Besov}

We are going to remind a few basic facts from the theory of interpolation and  approximation spaces spaces \cite{BL}, \cite{BB}, \cite{KPS}.

Let $E$ be a linear space. A quasi-norm $\|\cdot\|_{E}$ on $E$ is
a real-valued function on $E$ such that for any $f,f_{1}, f_{2}\in
E$ the following holds true

\begin{enumerate}
\item $\|f\|_{E}\geq 0;$

\item $\|f\|_{E}=0  \Longleftrightarrow   f=0;$

\item $\|-f\|_{E}=\|f\|_{E};$

\item $\|f_{1}+f_{2}\|_{E}\leq C_{E}(\|f_{1}\|_{E}+\|f_{2}\|_{E}),
C_{E}>1.$

\end{enumerate}

We say that two quasi-normed linear spaces $E$ and $F$ form a
pair, if they are linear subspaces of a linear space $\mathcal{A}$
and the conditions $\|f_{k}-g\|_{E}\rightarrow 0,$ and
$\|f_{k}-h\|_{F}\rightarrow 0, f_{k}, g, h \in \mathcal{A}, $
imply equality $g=h$. For a such pair $E,F$ one can construct a
new quasi-normed linear space $E\bigcap F$ with quasi-norm
$$
\|f\|_{E\bigcap F}=\max\left(\|f\|_{E},\|f\|_{F}\right)
$$
and another one $E+F$ with the quasi-norm
$$
\|f\|_{E+ F}=\inf_{f=f_{0}+f_{1},f_{0}\in E, f_{1}\in
F}\left(\|f_{0}\|_{E}+\|f_{1}\|_{F}\right).
$$

All quasi-normed spaces $H$ for which $E\bigcap F\subset H \subset
E+F$ are called intermediate between $E$ and $F$. A group
homomorphism $T: E\rightarrow F$ is called bounded if
$$
\|T\|=\sup_{f\in E,f\neq 0}\|Tf\|_{F}/\|f\|_{E}<\infty.
$$
One says that an intermediate quasi-normed linear space $H$
interpolates between $E$ and $F$ if every bounded homomorphism $T:
E+F\rightarrow E+F$ which is a bounded homomorphism of $E$ into
$E$ and a bounded homomorphism of $F$ into $F$ is also a bounded
homomorphism of $H$ into $H$.

On $E+F$ one considers the so-called Peetere's $K$-functional
\begin{equation}
K(f, t)=K(f, t,E, F)=\inf_{f=f_{0}+f_{1},f_{0}\in E, f_{1}\in
F}\left(\|f_{0}\|_{E}+t\|f_{1}\|_{F}\right).\label{K}
\end{equation}
The quasi-normed linear space $(E,F)^{K}_{\theta,q}, 0<\theta<1,
0<q\leq \infty,$ or $0\leq\theta\leq 1,  q= \infty,$ is introduced
as a set of elements $f$ in $E+F$ for which
\begin{equation}
\|f\|_{\theta,q}=\left(\int_{0}^{\infty}
\left(t^{-\theta}K(f,t)\right)^{q}\frac{dt}{t}\right)^{1/q}.\label{Knorm}
\end{equation}

It turns out that $(E,F)^{K}_{\theta,q}, 0<\theta<1, 0\leq q\leq
\infty,$ or $0\leq\theta\leq 1,  q= \infty,$ with the quasi-norm
(\ref{Knorm})  interpolates between $E$ and $F$. 

Let us introduce another functional on $E+F$, where $E$ and $F$
form a pair of quasi-normed linear spaces
$$
\mathcal{E}(f, t)=\mathcal{E}(f, t,E, F)=\inf_{g\in F,
\|g\|_{F}\leq t}\|f-g\|_{E}.
$$

\begin{defn}The approximation space $\mathcal{E}_{\alpha,q}(E, F),
0<\alpha<\infty, 0<q\leq \infty $ is a quasi-normed linear spaces
of all $f\in E+F$ with the following quasi-norm
\begin{equation}
\left(\int_{0}^{\infty}\left(t^{\alpha}\mathcal{E}(f,
t)\right)^{q}\frac{dt}{t}\right)^{1/q}.
\end{equation}
\end{defn}

\begin{thm}\label{equivalence}
 Suppose that $\mathcal{T}\subset F\subset E$ are quasi-normed
linear spaces and $E$ and $F$ are complete.

 If there
exist $C>0$ and $\beta >0$ such that for any $f\in F$ the
following Jackson-type inequality is verified
\begin{equation}
t^{\beta}\mathcal{E}(t,f,\mathcal{T},E)\leq C\|f\|_{F},\label{dir}
t>0,
\end{equation}
 then the following embedding holds true
\begin{equation}
(E,F)^{K}_{\theta,q}\subset \mathcal{E}_{\theta\beta,q}(E,
\mathcal{T}), \>0<\theta<1, \>0<q\leq \infty.
\end{equation}

If there exist $C>0$ and $\beta>0$ such that for any $f\in \mathcal{T}$ the following Bernstein-type inequality holds
\begin{equation}
\|f\|_{F}\leq C\|f\|^{\beta}_{\mathcal{T}}\|f\|_{E}
\end{equation}
then 
\begin{equation}
\mathcal{E}_{\theta\beta, q}(E, \mathcal{T})\subset (F, F)^{K}_{\theta, q}, , \>0<\theta<1, \>0<q\leq \infty.
\end{equation}
\end{thm}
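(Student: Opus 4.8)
\medskip

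The plan is to recognize this as a standard equivalence between the $K$-method of interpolation and approximation spaces, and to import the machinery from the general theory (cf. \cite{BL}, \cite{BB}, \cite{KPS}, and the direct treatments in \cite{Pes09}, \cite{Pes11}, \cite{AI}). The two halves are proved independently: the first half takes the Jackson/Bernstein \emph{direct} inequality (\ref{dir}) as hypothesis and produces the embedding $(E,F)^{K}_{\theta,q}\subset \mathcal{E}_{\theta\beta,q}(E,\mathcal{T})$, while the second half uses a Bernstein-type hypothesis to produce the reverse-type embedding into $(F,F)^{K}_{\theta,q}$. I would handle the first (and main) half by relating the two relevant functionals: Peetre's $K$-functional $K(f,t,E,F)$ and the approximation functional $\mathcal{E}(s,f,\mathcal{T},E)$. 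The key step is to show that the Jackson inequality (\ref{dir}) forces a pointwise comparison of the form $\mathcal{E}(s,f,\mathcal{T},E)\leq C\, K(f, s^{-1/\beta}, E, F)$ for all $s>0$: given an optimal (or near-optimal) decomposition $f=f_0+f_1$ with $f_0\in E$, $f_1\in F$ realizing $K(f,t,E,F)$ up to a constant, one approximates $f_1$ within $\mathcal{T}$ using (\ref{dir}) with the budget $\|g\|_F\le$ (a multiple of) $\|f_1\|_F$, and estimates $\|f-g\|_E\le \|f_0\|_E+\|f_1-g\|_E$, choosing $t$ and $s$ so that the resulting powers of $t$ match the scaling $t^\beta\mathcal{E}\lesssim\|f_1\|_F$; a change of variables $s=t^{-1/\beta}$ then converts one functional into the other up to constants.

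\medskip

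Once this pointwise comparison is in hand, the embedding follows by inserting it into the defining quasi-norms and performing the substitution $s=t^{-1/\beta}$, $\tfrac{ds}{s}=-\tfrac1\beta\tfrac{dt}{t}$, which turns $\int_0^\infty (s^{\theta\beta}\mathcal{E}(s,f))^q\,\tfrac{ds}{s}$ into (a constant times) $\int_0^\infty (t^{-\theta}K(f,t))^q\,\tfrac{dt}{t}=\|f\|_{\theta,q}^q$; the monotonicity of $s\mapsto\mathcal{E}(s,f)$ and $t\mapsto K(f,t)$ makes the discretization/continuous versions of these quasi-norms equivalent, which is what legitimizes replacing near-optimal decompositions by genuinely optimal budget choices. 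For the second half, the Bernstein-type hypothesis $\|f\|_F\le C\|f\|_{\mathcal{T}}^{\beta}\|f\|_E$ is combined with a standard dyadic telescoping argument: writing an element of $\mathcal{E}_{\theta\beta,q}(E,\mathcal{T})$ as a sum of increments $g_{k+1}-g_k$ with $\|g_k\|_{\mathcal{T}}$ controlled geometrically and $\|f-g_k\|_E$ decaying at the approximation rate, one applies the Bernstein inequality to each increment to bound its $F$-quasi-norm, and then sums the geometric-type series after invoking the quasi-triangle inequality for $\|\cdot\|_F$ (property (4) of a quasi-norm, with its constant $C_F>1$); this reconstructs membership in $(F,F)^{K}_{\theta,q}$ with the claimed index $\theta$.

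\medskip

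The main obstacle I anticipate is purely bookkeeping rather than conceptual: because $\mathcal{T}\subset F\subset E$ with all spaces only \emph{quasi}-normed, every time one iterates the triangle inequality one picks up a power of the quasi-norm constant $C_F$ (or $C_E$), so in the telescoping argument of the second half one must organize the dyadic sum so that these accumulating constants are absorbed into a convergent geometric factor — this is the standard but delicate point where the quasi-norm (as opposed to norm) setting requires care, and where the Aoki--Rolewicz-type renorming or an explicit geometric estimate is invoked. A secondary point to be careful about is the endpoint behavior of the integrals near $t=0$ and $t=\infty$ and the role of the condition $\mathcal{T}\subset F\subset E$ in guaranteeing that $K(f,t)$ and $\mathcal{E}(s,f)$ are finite and behave correctly at the extremes; since the excerpt only claims the open-parameter range $0<\theta<1$, $0<q\le\infty$, these endpoints can be dealt with by the usual monotonicity and the fact that $K(f,t)/\min(1,t)$ is bounded. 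Beyond these, the argument is the textbook Jackson--Bernstein equivalence and I would cite \cite{BL}, \cite{BB}, \cite{KPS} for the precise constants.
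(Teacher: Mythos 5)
The paper itself gives no proof of Theorem \ref{equivalence}: it is recalled as a standard fact of the Jackson--Bernstein (approximation-space versus $K$-interpolation-space) theory, with \cite{BL}, \cite{BB}, \cite{KPS} cited for the general theory and \cite{Pes09}, \cite{Pes11}, \cite{AI} for direct proofs of more general statements, so there is no in-paper argument to compare yours against line by line. Your outline is exactly the classical route those references take, and as a strategy it is sound: for the direct part, a near-optimal decomposition $f=f_{0}+f_{1}$ plus the Jackson hypothesis (\ref{dir}) yields a pointwise comparison of $\mathcal{E}(\cdot,f,\mathcal{T},E)$ with the $K$-functional, and a change of variables in the defining quasi-norms gives the embedding; for the converse, the dyadic telescoping of near-best approximants combined with the Bernstein hypothesis and the discrete Hardy inequality (cf. \cite{DeV}) gives the reverse embedding, with the quasi-norm constants absorbed geometrically as you indicate. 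Two points to fix in the write-up. First, with the Jackson inequality in the form $t^{\beta}\mathcal{E}(t,f,\mathcal{T},E)\leq C\|f\|_{F}$, the correct pointwise comparison is $\mathcal{E}(s,f,\mathcal{T},E)\leq C\,K(f,c\,s^{-\beta},E,F)$, not $K(f,s^{-1/\beta},E,F)$ as you wrote; with your exponent the substitution produces the index $\theta/\beta$ rather than the asserted $\theta\beta$, so the scaling must be redone as stated here (your own requirement that the powers ``match'' forces this). Second, the target space $(F,F)^{K}_{\theta,q}$ in the second embedding is evidently a misprint in the statement for $(E,F)^{K}_{\theta,q}$ (for $0<\theta<1$ one has $(F,F)^{K}_{\theta,q}=F$, and the inclusion $\mathcal{E}_{\theta\beta,q}(E,\mathcal{T})\subset F$ is false in general); the telescoping argument you describe in fact estimates $K(f,t,E,F)$ and hence proves membership in $(E,F)^{K}_{\theta,q}$, so you should state and prove that corrected inclusion rather than reproduce the typo.
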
\label{intthm}

Now we return to the situation on domains. 
Let $L$ be  a   self-adjoint positive definite operator   in  a Hilbert space $L_{2}(\Omega)$ which was introduced in the first section. We consider its positive root
$L^{1/2}$ and let 
 $\mathcal{D}^{r}, r\in \mathbb{R}_{+},$ be the domain of the operator $L^{r/2},  r\in \mathbb{R}_{+},$ with the graph norm $(I+L)^{r/2}$.

The inhomogeneous  Besov space $ \mathbf{B}_{2,q}^\alpha(\sqrt{L})$ is introduced as an interpolation space between the Hilbert space $L_{2}(\Omega)$ and Sobolev space $\mathcal D^{r}$ where
 $r$ can be any natural number such that $0<\alpha<r, 1\leq
q\leq\infty$. Namely, we have 
$$
\mathbf{B}^{\alpha}_{2,q}(\sqrt{L})=( L_{2}(\Omega),\mathcal{D}^{r})^{K}_{\theta, q},\>\>\> 0<\theta=\alpha/r<1,\>\>\>
1\leq q\leq \infty.
$$
where $K$ is the Peetre's interpolation functor.

We introduce a notion of best
approximation
\begin{equation}
\mathcal{E}(f,\omega)=\inf_{g\in
E_{\omega}(\sqrt{L})}\|f-g\|_{L_{2}(\Omega)}.\label{BA1}
\end{equation}

Our goal is to apply Theorem \ref{equivalence} in the situation where $E$ is the linear space $L_{2}(\Omega)$ with its regular norm,  $\>F$ is the linear space $\mathcal {D}^{r},$ with the graph norm $(I+L)^{r/2}$, and $\mathcal{T}=E_{\omega}(\sqrt{L})=E_{\omega^{2}}(L)$ is a natural abelian group of finite sequences of Fourier coefficients $\mathbf{c}=(c_{1},...c_{m})\in E_{\omega}(\sqrt{L})$ where $m$ is the greatest index such that the eigenvalue $\lambda_{m}\leq \omega$.
The quasi-norm $\|\mathbf{c}\|_{E_{\omega}(\sqrt{L})} $ where $\mathbf{c}=(c_{1},...c_{m})\in E_{\omega}(\sqrt{L})$  is defined as square root from the highest eigenvalue  $\lambda_{j}$ for which the corresponding Fourier coefficient $c_{j}\neq 0$ but $c_{j+1}=...=c_{m}=0$:
$$
\|\mathbf{c}\|_{E_{\omega}(L)} =\|(c_{1},...c_{m})\|_{E_{\omega}(L)}=\max\left\{\sqrt{\lambda_{j}}: c_{j}\neq 0, \>\>c_{j+1}=...=c_{m}=0\right\}.
$$

\begin{rem}
Let us stress that  the  reason we need language of  quasi-normed spaces is because $\|\mathbf{c}\|_{E_{\omega}(L)} $ is not a norm but only a quasi-norm on $ E_{\omega}(\sqrt{L})$.
\end{rem}
By using Plancherel Theorem it is easy to verify a  generalization of the Bernstein inequality for bandlimited functions $f\in E_{\omega}(\sqrt{L})$:
$$
\|L^{r}f\|_{L_{2}(\Omega)}\leq \omega ^{r}\|f\|_{L_{2}(\Omega)},\>\>\>r\in \mathbb{R}_{+},
$$
and  an analog  of the Jackson inequality:
$$
\mathcal{E}(f,\omega)\leq \omega^{-r}\|L^{r}f\|_{L_{2}(\Omega)},\>\>\>r\in \mathbb{R}_{+}.
$$

These two inequalities and Theorem \ref{equivalence}  imply the following result (compare to  \cite{Pes88}, \cite{Pes09}, \cite{Pes11}).

\begin{thm} \label{approx}
The norm of the Besov space $\mathbf{B}_{2,q}^{\alpha}(\sqrt{L}), \alpha>0, 1\leq q\leq\infty$ is
equivalent to the following norm
\begin{equation}
\|f\|_{L_{2}(\Omega)}+\left(\sum_{k=0}^{\infty}\left(2^{k\alpha }\mathcal{E}(f,
2^{k})\right)^{q}\right)^{1/q}.
\end{equation}
\label{maintheorem1}
\end{thm}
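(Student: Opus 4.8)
The plan is to derive Theorem~\ref{approx} as a direct consequence of Theorem~\ref{equivalence}, applied with the concrete triple $E = L_{2}(\Omega)$ (with its Hilbert norm), $F = \mathcal{D}^{r}$ (with the graph norm $\|(I+L)^{r/2}\cdot\|$), and $\mathcal{T} = E_{\omega}(\sqrt{L})$ for $\omega = 2^{k}$, $k = 0,1,\dots$, equipped with the quasi-norm recalled just above the statement. First I would fix $\alpha > 0$ and $1 \leq q \leq \infty$, and pick any natural number $r$ with $\alpha < r$, so that by definition $\mathbf{B}_{2,q}^{\alpha}(\sqrt{L}) = (L_{2}(\Omega), \mathcal{D}^{r})^{K}_{\theta,q}$ with $\theta = \alpha/r \in (0,1)$. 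The aim is to identify this $K$-interpolation space with the approximation space $\mathcal{E}_{\alpha, q}(L_{2}(\Omega), E_{\omega}(\sqrt{L}))$ defined through $\mathcal{E}(f,\omega)$ in \eqref{BA1}, and then rewrite the continuous approximation-space quasi-norm as the dyadic sum appearing in the theorem.

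Next I would check that the hypotheses of Theorem~\ref{equivalence} hold with $\beta = r$ in an appropriately rescaled form. The Jackson inequality $\mathcal{E}(f,\omega) \leq \omega^{-r}\|L^{r}f\|_{L_{2}(\Omega)}$ recalled above gives, for $f \in \mathcal{D}^{r}$ and $g \in E_{\omega}(\sqrt{L})$ chosen with $\|g\|_{E_{\omega}(\sqrt{L})} \leq \omega$, the bound $\mathcal{E}(\omega, f, \mathcal{T}, E) \leq \omega^{-r}\|L^{r}f\| \leq \omega^{-r}\|(I+L)^{r/2}f\|_{\mathcal{D}^{r}}$, which is the Jackson-type inequality \eqref{dir} with exponent $\beta = r$ (after the standard reparametrization $t \leftrightarrow \omega$ and absorbing the discrepancy between $L^{r}$ and $(I+L)^{r/2}$, which only costs a constant since $L$ is positive definite). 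This yields the embedding $(L_{2}(\Omega), \mathcal{D}^{r})^{K}_{\theta,q} \subset \mathcal{E}_{\theta r, q}(L_{2}(\Omega), E_{\omega}(\sqrt{L})) = \mathcal{E}_{\alpha, q}$. For the reverse, the Bernstein inequality $\|L^{r}f\| \leq \omega^{r}\|f\|$ for $f \in E_{\omega}(\sqrt{L})$ supplies, after taking logarithms in the quasi-norm $\|f\|_{E_{\omega}(\sqrt{L})}$, the Bernstein-type inequality $\|f\|_{F} \leq C\|f\|_{\mathcal{T}}^{r}\|f\|_{E}$ required in the second half of Theorem~\ref{equivalence}, whence $\mathcal{E}_{\alpha, q} \subset (\mathcal{D}^{r}, \mathcal{D}^{r})^{K}_{\theta,q}$; combined with the trivial embedding $(\mathcal{D}^r,\mathcal{D}^r)^K_{\theta,q}=\mathcal D^r\subset (L_2(\Omega),\mathcal D^r)^K_{\theta,q}$ and standard interpolation facts one gets the matching inclusion, so the two spaces coincide with equivalence of quasi-norms.

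The final step is purely the standard discretization of the continuous approximation quasi-norm: since $\omega \mapsto \mathcal{E}(f,\omega)$ is non-increasing, the integral $\bigl(\int_{0}^{\infty}(\omega^{\alpha}\mathcal{E}(f,\omega))^{q}\,\tfrac{d\omega}{\omega}\bigr)^{1/q}$ is equivalent to the dyadic sum $\bigl(\sum_{k=0}^{\infty}(2^{k\alpha}\mathcal{E}(f,2^{k}))^{q}\bigr)^{1/q}$, with the low-frequency part $\mathcal E(f,\omega)$ for $\omega<1$ controlled by $\|f\|_{L_2(\Omega)}$; adding the term $\|f\|_{L_{2}(\Omega)}$ accounts for the ``inhomogeneous'' normalization and makes the resulting expression a genuine norm. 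I expect the main obstacle to be bookkeeping rather than conceptual: one must carefully match the abstract exponents and the reparametrization $t \leftrightarrow \omega$ in Theorem~\ref{equivalence}, handle the quasi-norm structure of $E_{\omega}(\sqrt{L})$ (which is why the hypotheses are stated multiplicatively, as $\|f\|_F\le C\|f\|_{\mathcal T}^\beta\|f\|_E$, rather than additively), and verify that replacing $(I+L)^{r/2}$ by $L^{r/2}$ throughout only changes constants — all routine given that $L$ is positive definite with spectrum bounded away from zero.
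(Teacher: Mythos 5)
Your overall route is exactly the paper's: the paper proves Theorem \ref{approx} by nothing more than invoking Theorem \ref{equivalence} for $E=L_{2}(\Omega)$, $F=\mathcal{D}^{r}$, $\mathcal{T}$ the bandlimited vectors with the spectral quasi-norm, together with the Jackson and Bernstein inequalities, and then discretizing the approximation-space quasi-norm dyadically (your treatment of the low-frequency range $\omega<1$ and of the extra term $\|f\|_{L_{2}(\Omega)}$ is the right reading of the inhomogeneous normalization). However, two specific steps in your write-up fail as literally stated. First, the bookkeeping between $L$ and $\sqrt{L}$: for $f\in\mathcal{D}^{r}$ the correct Jackson bound relative to $E_{\omega}(\sqrt{L})$ is $\mathcal{E}(f,\omega)\leq\omega^{-r}\|L^{r/2}f\|_{L_{2}(\Omega)}$ (and Bernstein reads $\|L^{r/2}f\|\leq\omega^{r}\|f\|$ on $E_{\omega}(\sqrt{L})$); your chain $\mathcal{E}(f,\omega)\leq\omega^{-r}\|L^{r}f\|\leq\omega^{-r}\|(I+L)^{r/2}f\|$ is not salvageable by positive definiteness, since $\|L^{r}f\|\lesssim\|(I+L)^{r/2}f\|$ is false ($L^{r}$ has twice the order); positive definiteness only lets you pass between $\|L^{r/2}f\|$ and the graph norm $\|(I+L)^{r/2}f\|$. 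This slip is inherited from the displayed inequalities in the paper, which are themselves homogeneity-mismatched, but your justification for absorbing the discrepancy is the part that would not survive scrutiny.

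Second, the converse embedding: the conclusion of the Bernstein half of Theorem \ref{equivalence} is printed as $\mathcal{E}_{\theta\beta,q}(E,\mathcal{T})\subset(F,F)^{K}_{\theta,q}$, which is an evident typo for $(E,F)^{K}_{\theta,q}$. Taking it at face value, as you do, yields $\mathcal{E}_{\alpha,q}\subset(\mathcal{D}^{r},\mathcal{D}^{r})^{K}_{\theta,q}=\mathcal{D}^{r}$, which is false whenever $\alpha<r$ (the approximation space contains elements with $\mathcal{E}(f,2^{k})\sim 2^{-k\alpha'}$, $\alpha<\alpha'<r$, that are not in $\mathcal{D}^{r}$), so the patch via ``$\mathcal{D}^{r}\subset(L_{2}(\Omega),\mathcal{D}^{r})^{K}_{\theta,q}$ and standard interpolation facts'' rests on a false premise even though its logical shape would deliver the desired inclusion. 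The fix is simply to use the corrected statement $\mathcal{E}_{\theta\beta,q}(E,\mathcal{T})\subset(E,F)^{K}_{\theta,q}$ (this is the standard form of the abstract Bernstein-type embedding, cf. the cited sources), after which your argument closes; also note that in the verification of \eqref{dir} the quasi-norm constraint $\|g\|_{\mathcal{T}}\leq t$ identifies $\mathcal{E}(t,f,\mathcal{T},E)$ with the best approximation \eqref{BA1} only when $\mathcal{T}$ is taken to be the union of all $E_{\omega}(\sqrt{L})$, not a single fixed $\omega=2^{k}$.
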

Let	function $F_{j}$ be the same as in subsection \ref{cutoff} and  
\begin{equation}\label{range}
F_j\left(\sqrt{L}\right):  L_{2}(\Omega)  \rightarrow E_{[2^{j-1},  2^{j+1}]}(\sqrt{L}),\>\>\> \left\|F_j\left(\sqrt{L}\right)\right\|\leq1.
 \end{equation}

    \begin{thm}\label{mainLemma1}
     The norm of the Besov space $\mathbf{B}_{2,q}^{\alpha}(\sqrt{L})$  for $\alpha>0, 1\leq
q\leq \infty$ is equivalent to
\begin{equation} 
\left(\sum_{j=0}^{\infty}\left(2^{j\alpha
}\left \|F_j\left(\sqrt{L}\right)f\right \|_{L_{2}(\Omega)}\right)^{q}\right)^{1/q},
\label{normequiv-1}
\end{equation}
  with the standard modifications for $q=\infty$.
\end{thm}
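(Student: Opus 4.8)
The plan is to show that the quantity in (\ref{normequiv-1}), abbreviated
$$
\|f\|_{*}=\left(\sum_{j=0}^{\infty}\left(2^{j\alpha}\left\|F_{j}\left(\sqrt{L}\right)f\right\|_{L_{2}(\Omega)}\right)^{q}\right)^{1/q},
$$
is equivalent to the norm $\|f\|_{L_{2}(\Omega)}+\left(\sum_{k=0}^{\infty}\left(2^{k\alpha}\mathcal{E}(f,2^{k})\right)^{q}\right)^{1/q}$ furnished by Theorem \ref{approx}, after which the statement follows. Two facts from subsection \ref{cutoff} will be used repeatedly. First, since $Q(2^{-1})=h(2^{-1})-h(1)=0$ and $Q(2)=h(2)-h(4)=0$, the function $F_{j}^{2}$ vanishes at the endpoints of its support $[2^{j-1},2^{j+1}]$, so the spectral supports of $F_{j}^{2}\left(\sqrt{L}\right)$ and $F_{j'}^{2}\left(\sqrt{L}\right)$ are disjoint whenever $|j-j'|\geq 2$; hence the functions $F_{j}^{2}\left(\sqrt{L}\right)f$ with $j$ even are mutually orthogonal in $L_{2}(\Omega)$, and likewise those with $j$ odd. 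Second, $F_{j}^{2}\left(\sqrt{L}\right)f$ is bandlimited to $[2^{j-1},2^{j+1}]$, so $S_{k-1}f=\sum_{j=0}^{k-1}F_{j}^{2}\left(\sqrt{L}\right)f\in E_{2^{k}}\left(\sqrt{L}\right)$, while $F_{j}\left(\sqrt{L}\right)$ annihilates $E_{2^{j-1}}\left(\sqrt{L}\right)$ for $j\geq 1$ because $F_{j}(2^{j-1})=\sqrt{Q(2^{-1})}=0$. We also use $\left\|F_{j}\left(\sqrt{L}\right)\right\|\leq 1$ from (\ref{range}) and $\|f\|^{2}_{L_{2}(\Omega)}=\sum_{j\geq 0}\left\|F_{j}\left(\sqrt{L}\right)f\right\|^{2}_{L_{2}(\Omega)}$ from (\ref{norm equality}).

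To bound $\|f\|_{*}$ by the Besov norm, fix $j\geq 1$ and let $g_{j}$ be the $L_{2}$-orthogonal projection of $f$ onto $E_{2^{j-1}}\left(\sqrt{L}\right)$, so $\|f-g_{j}\|_{L_{2}(\Omega)}=\mathcal{E}(f,2^{j-1})$ and $F_{j}\left(\sqrt{L}\right)g_{j}=0$. Then $F_{j}\left(\sqrt{L}\right)f=F_{j}\left(\sqrt{L}\right)(f-g_{j})$ gives $\left\|F_{j}\left(\sqrt{L}\right)f\right\|_{L_{2}(\Omega)}\leq\mathcal{E}(f,2^{j-1})$; since $2^{j\alpha}=2^{\alpha}2^{(j-1)\alpha}$, summing $q$-th powers over $j\geq 1$ and adding the term with $j=0$, estimated by $\left\|F_{0}\left(\sqrt{L}\right)f\right\|_{L_{2}(\Omega)}\leq\|f\|_{L_{2}(\Omega)}$, yields
$$
\|f\|_{*}\leq\|f\|_{L_{2}(\Omega)}+2^{\alpha}\left(\sum_{k=0}^{\infty}\left(2^{k\alpha}\mathcal{E}(f,2^{k})\right)^{q}\right)^{1/q},
$$
which by Theorem \ref{approx} is at most a constant times $\|f\|_{\mathbf{B}_{2,q}^{\alpha}(\sqrt{L})}$ (with the usual reading when $q=\infty$).

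For the reverse inequality put $b_{j}=\left\|F_{j}\left(\sqrt{L}\right)f\right\|_{L_{2}(\Omega)}$. Applying Hölder's inequality (or, when $q\leq 2$, the inclusion $\ell^{q}\subset\ell^{2}$) to $\|f\|^{2}_{L_{2}(\Omega)}=\sum_{j}b_{j}^{2}=\sum_{j}2^{-2j\alpha}\bigl(2^{j\alpha}b_{j}\bigr)^{2}$ and using $\alpha>0$ gives $\|f\|_{L_{2}(\Omega)}\leq C(\alpha,q)\|f\|_{*}$. For the approximation term, $S_{k-1}f\in E_{2^{k}}\left(\sqrt{L}\right)$ and $f-S_{k-1}f=\sum_{j\geq k}F_{j}^{2}\left(\sqrt{L}\right)f$, so splitting the sum into even and odd indices, invoking the orthogonality above and $\left\|F_{j}\left(\sqrt{L}\right)\right\|\leq 1$,
$$
\mathcal{E}(f,2^{k})\leq\left\|\sum_{j\geq k}F_{j}^{2}\left(\sqrt{L}\right)f\right\|_{L_{2}(\Omega)}\leq\sqrt{2}\left(\sum_{j\geq k}b_{j}^{2}\right)^{1/2}.
$$
It then suffices to prove the discrete Hardy-type inequality
$$
\sum_{k=0}^{\infty}2^{k\alpha q}\left(\sum_{j\geq k}b_{j}^{2}\right)^{q/2}\leq C(\alpha,q)\sum_{j=0}^{\infty}\bigl(2^{j\alpha}b_{j}\bigr)^{q}
$$
(and its evident $q=\infty$ analogue), since together with the two preceding displays and Theorem \ref{approx} it yields $\|f\|_{\mathbf{B}_{2,q}^{\alpha}(\sqrt{L})}\leq C\|f\|_{*}$.

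The main obstacle is this last Hardy-type inequality; the rest is bookkeeping with the spectral calculus already established. I would prove it by setting $a_{j}=2^{j\alpha}b_{j}$ and treating $0<q\leq 2$ and $2\leq q<\infty$ separately (with $q=\infty$ handled by a direct supremum estimate). For $q\leq 2$, subadditivity of $t\mapsto t^{q/2}$ gives $\bigl(\sum_{j\geq k}2^{-2j\alpha}a_{j}^{2}\bigr)^{q/2}\leq\sum_{j\geq k}2^{-jq\alpha}a_{j}^{q}$, and interchanging the order of summation, followed by summation of the geometric series $\sum_{k\leq j}2^{k\alpha q}$, produces the right-hand side. For $q\geq 2$, one first applies Hölder's inequality in the inner sum with the split $2^{-2j\alpha}=2^{-j\epsilon\alpha}2^{-j(2-\epsilon)\alpha}$ for a small $\epsilon>0$, sums the geometric series in $j$ to extract a factor $2^{-k(2-\epsilon)\alpha}$, and then again interchanges the order of summation. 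In every case the strict positivity of $\alpha$ is exactly what makes the double sum collapse, and no further information about $\Omega$ or $L$ is needed.
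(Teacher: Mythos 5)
Your proposal is correct and follows essentially the same route as the paper: the easy direction via $F_j\left(\sqrt{L}\right)f=F_j\left(\sqrt{L}\right)(f-g)$ with $g\in E_{2^{j-1}}\left(\sqrt{L}\right)$, and the converse via the Calder\'on decomposition, the tail estimate for $\mathcal{E}(f,2^{k})$, and a discrete Hardy-type inequality, all compared with Theorem \ref{approx}. The only differences are cosmetic: you use the squared decomposition $\sum_j F_j^{2}\left(\sqrt{L}\right)f$ with almost-orthogonality to get an $\ell^{2}$ tail and prove the Hardy inequality by hand, whereas the paper bounds $\mathcal{E}(f,2^{k})$ by the $\ell^{1}$ tail and cites the discrete Hardy inequality from \cite{DeV}.
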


\begin{proof}

In the same notations as above the following version of Calder\'on decomposition holds:
$$
\sum_{j\in \mathbb{N}} F_j\left(\sqrt{L}\right)f= f,\> \>\>f\in L_{2}(\Omega).
$$
We obviously have
$$
\mathcal{E}(f, 2^{k})\leq \sum_{j> k} \left \|F_j\left(\sqrt{L}\right)f\right \|_{L_{2}(\Omega)}.
$$
By using the discrete Hardy inequality \cite{DeV} we obtain the estimate
\begin{equation} \label{direct}
\|f\|+\left(\sum_{k=0}^{\infty}\left(2^{k\alpha }\mathcal{E}(f,
2^{k})\right)^{q}\right)^{1/q}\leq C \left(\sum_{j=0}^{\infty}\left(2^{j\alpha
}\left \|F_j\left(\sqrt{L}\right)f\right \|_{L_{2}(\Omega)}\right)^{q}\right)^{1/q}
\end{equation}
Conversely, 
 for any $g\in E_{2^{j-1}}(\sqrt{L})$ we have
$$
\left\|F_j\left(\sqrt{L}\right)f\right\|_{L_{2}(\Omega)}=\left\|F_{j}\left(\sqrt{L}\right)(f-g)\right\|_{L_{2}(\Omega)}\leq \|f-g\|_{L_{2}(\Omega)}.
$$
It gives the inequality 
$$
\left\|F_j\left(\sqrt{L}\right)f\right\|_{L_{2}(\Omega)}\leq \mathcal{E}(f,\>2^{j-1}),
$$
which shows that the inequality opposite to (\ref{direct}) holds.
 This completes the proof.

\end{proof}

 \begin{thm}\label{mainLemma2}
     The norm of the Besov space $\mathbf{B}_{2,q}^{\alpha}(\sqrt{L})$  for $\alpha>0, 1\leq
q\leq \infty$ is equivalent  to
\begin{equation} 
 \left(\sum_{j=0}^{\infty}2^{j\alpha q }
\left(\sum_{i=1}^{I_{j}}\left|\left<f,\varphi_{j,i}\right>\right|^{2}\right)^{q/2}\right)^{1/q},%\asymp \|f\|_{B_{q}^{\alpha}} 
\label{normequiv}
\end{equation}
  with the standard modifications for $q=\infty$.
\end{thm}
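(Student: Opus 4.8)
The plan is to reduce Theorem~\ref{mainLemma2} to Theorem~\ref{mainLemma1}, which already identifies the Besov norm with the weighted $\ell^q$ sum of the quantities $\|F_j(\sqrt L)f\|_{L_2(\Omega)}$. Thus it suffices to show that for each fixed $j$ the inner sum $\sum_{i=1}^{I_j}\left|\left<f,\varphi_{j,i}\right>\right|^2$ is comparable, with constants independent of $j$, to $\|F_j(\sqrt L)f\|_{L_2(\Omega)}^2$. This comparison is exactly the content of the double inequality (\ref{???}): since $\varphi_{j,i}=F_j(\sqrt L)\Phi_{j,i}$ and $F_j(\sqrt L)$ is self-adjoint, we have $\left<f,\varphi_{j,i}\right>=\left<F_j(\sqrt L)f,\Phi_{j,i}\right>$, and (\ref{???}) gives
\[
(1-\delta)\left\|F_j\left(\sqrt L\right)f\right\|_{L_2(\Omega)}^2\leq \sum_{i=1}^{I_j}\left|\left<f,\varphi_{j,i}\right>\right|^2\leq \left\|F_j\left(\sqrt L\right)f\right\|_{L_2(\Omega)}^2.
\]
So the quantity inside the big parentheses in (\ref{normequiv}) is, term by term, equivalent (with the $j$-independent constants $(1-\delta)^{1/2}$ and $1$) to $\|F_j(\sqrt L)f\|_{L_2(\Omega)}$.

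Next I would raise this term-by-term equivalence to the level of the weighted $\ell^q$-norms. Multiplying the $j$-th bracket by $2^{j\alpha}$ and taking the $\ell^q(\mathbb N)$-norm is a monotone operation, so the two-sided bound above passes through directly:
\[
(1-\delta)^{1/2}\left(\sum_{j=0}^{\infty}\left(2^{j\alpha}\left\|F_j\left(\sqrt L\right)f\right\|_{L_2(\Omega)}\right)^q\right)^{1/q}\leq \left(\sum_{j=0}^{\infty}2^{j\alpha q}\left(\sum_{i=1}^{I_j}\left|\left<f,\varphi_{j,i}\right>\right|^2\right)^{q/2}\right)^{1/q}\leq \left(\sum_{j=0}^{\infty}\left(2^{j\alpha}\left\|F_j\left(\sqrt L\right)f\right\|_{L_2(\Omega)}\right)^q\right)^{1/q},
\]
with the obvious $\sup$-modification when $q=\infty$. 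Combining this with Theorem~\ref{mainLemma1}, which states that the middle-right expression is equivalent to $\|f\|_{\mathbf B_{2,q}^\alpha(\sqrt L)}$, yields the claimed equivalence and completes the proof.

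There is no serious obstacle here; the theorem is essentially a corollary of the machinery already assembled. The only point requiring a word of care is that all the constants in (\ref{???})—hence in the displayed inequalities—are genuinely independent of $j$. This is guaranteed because the frame inequalities (\ref{frame-ineq}) of Theorem~\ref{Frame-th} hold with constants $1-\delta$ and $1$ uniformly in $\omega$, and they are applied at $\omega=\omega_j=2^{j+1}$ with the matching spacing $\rho_j=a_0\delta^{1/d}\omega_j^{-1/2}$ from (\ref{spacing}), so the hypothesis of Theorem~\ref{Frame-th} is met for every $j$ simultaneously. Given that uniformity, passing from the scalar estimates to the $\ell^q$ estimates is purely formal.

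\begin{proof}
By (\ref{FRAME}) we have $\varphi_{j,i}=F_j\left(\sqrt L\right)\Phi_{j,i}$, and since $F_j\left(\sqrt L\right)$ is self-adjoint,
$$
\left<f,\varphi_{j,i}\right>=\left<f,F_j\left(\sqrt L\right)\Phi_{j,i}\right>=\left<F_j\left(\sqrt L\right)f,\Phi_{j,i}\right>.
$$
Because $F_j\left(\sqrt L\right)f\in E_{\omega_j}(\sqrt L)=E_{2^{2j+2}}(L)$ and the spacing $\rho_j=a_0\delta^{1/d}\omega_j^{-1/2}$ of (\ref{spacing}) satisfies the hypothesis of Theorem~\ref{Frame-th} with $\omega=\omega_j$, the double inequality (\ref{frame-ineq}) applied to $F_j\left(\sqrt L\right)f$ gives, for every $j=0,1,\dots$,
$$
(1-\delta)\left\|F_j\left(\sqrt L\right)f\right\|^2_{L_2(\Omega)}\leq \sum_{i=1}^{I_j}\left|\left<f,\varphi_{j,i}\right>\right|^2\leq \left\|F_j\left(\sqrt L\right)f\right\|^2_{L_2(\Omega)},
$$
with constants independent of $j$. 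Raising to the power $q/2$, multiplying by $2^{j\alpha q}$, summing over $j$ and taking the $q$-th root (with the usual modification for $q=\infty$), we obtain
$$
(1-\delta)^{1/2}\left(\sum_{j=0}^{\infty}\left(2^{j\alpha}\left\|F_j\left(\sqrt L\right)f\right\|_{L_2(\Omega)}\right)^q\right)^{1/q}\leq \left(\sum_{j=0}^{\infty}2^{j\alpha q}\left(\sum_{i=1}^{I_j}\left|\left<f,\varphi_{j,i}\right>\right|^2\right)^{q/2}\right)^{1/q}\leq \left(\sum_{j=0}^{\infty}\left(2^{j\alpha}\left\|F_j\left(\sqrt L\right)f\right\|_{L_2(\Omega)}\right)^q\right)^{1/q}.
$$
By Theorem~\ref{mainLemma1} the outer expressions are both equivalent to $\|f\|_{\mathbf B_{2,q}^{\alpha}(\sqrt L)}$, hence so is (\ref{normequiv}).
\end{proof}
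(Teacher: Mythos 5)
Your proof is correct and follows essentially the same route as the paper: it uses the term-by-term frame equivalence $(1-\delta)\|F_j(\sqrt L)f\|^2\leq\sum_i|\langle f,\varphi_{j,i}\rangle|^2\leq\|F_j(\sqrt L)f\|^2$ coming from Theorem~\ref{Frame-th} together with $\varphi_{j,i}=F_j(\sqrt L)\Phi_{j,i}$ and self-adjointness, and then invokes Theorem~\ref{mainLemma1}. Your write-up simply makes explicit the $j$-uniformity of the constants and the passage to the weighted $\ell^q$-norms, which the paper leaves implicit.
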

\begin{proof} 

According to (\ref{???}) we have \begin{equation}
(1-\delta)\left \|F_j\left(\sqrt{L}\right)f\right \|_{L_{2}(\Omega)}^{2}\leq 
\sum _{ i=1}^{I_{j}}\left|\left< F_{j}\left({ \sqrt{L}}\right)f, \Phi_{j,i}\right>\right|^{2}\leq
\left \|F_j\left(\sqrt{L}\right)f\right \|_{L_{2}(\Omega)}^{2},
\end{equation}
where  $F_j\left(\sqrt{L}\right)f\in  E_{2^{j+1}}(\sqrt{L})$. Since $\varphi_{j,i}=F_{j}\left(\sqrt{L}\right)\Phi_{j,i}$ we obtain for any $f\in L_{2}(\Omega)$ 
$$
\sum_{i=1}^{I_{j}}\left|\left<f,\varphi_{j,i}\right>\right|^{2}\leq \left \|F_j\left(\sqrt{L}\right)f\right \|_{L_{2}(\Omega)}^{2}\leq \frac{1}{1-\delta}\sum_{i=1}^{I_{j}}\left|\left<f,\varphi_{j,i}\right>\right|^{2}.
$$
Theorem is proved. 
\end{proof}

\end{document}